\def\CC{\mathbb{C}}
\def\RR{\mathbb{R}}
\def\ZZ{\mathbb{Z}}
\def \codim{\mathop{\rm codim}\nolimits}
\def\dim{\mathop{\rm dim}\nolimits}
\def\ker{\mathop{\rm ker}\nolimits}
\renewcommand\phi{\varphi}
\newcommand{\simast}{%
    \ensuremath{%
       \stackrel{\mathsf{\ast}}{\sim}}}
\newtheorem{thm}{Theorem}[section]
\newtheorem{prop}[thm]{Proposition}
\newtheorem{defn}[thm]{Definition}
\newtheorem{lem}[thm]{Lemma}
\newtheorem{cor}[thm]{Corollary}
\newtheorem{rem}[thm]{Remark}
\numberwithin{equation}{section}
\def\beginpf{\begin{proof}}
\def\endpf{\end{proof}}
\def\beq{\begin{equation}}
\def\eeq{\end{equation}}
\def\imag{\mathop{\rm Im}\nolimits}
\begin{document}

\title{Toeplitz operators and Wiener-Hopf factorisation: an introduction}

\date {}

\author{M.~Cristina C\^amara\thanks{
Center for Mathematical Analysis, Geometry, and Dynamical Systems,
Departamento de Matem\'{a}tica, Instituto Superior T\'ecnico, 1049-001 Lisboa, Portugal. \tt ccamara@math.ist.utl.pt} \ 
}

\maketitle

\begin{abstract}
Wiener-Hopf factorisation plays an important role in the theory of Toeplitz operators. We consider here Toeplitz operators in the Hardy spaces $H^p$ of the upper half-plane and we review how their Fredholm properties can be studied in terms of a Wiener-Hopf factorisation of their symbols, obtaining necssary and sufficient conditions for the operator to be Fredholm or invertible, as well as formulae for their inverses or one-sided inverses when these exist. The results are applied to a class of singular integral equations in $L^p(\RR)$.
\end{abstract}

\noindent {\bf Keywords:}
Toeplitz operator, Wiener-Hopf factorisation, singular integral equations. 

\noindent{\bf MSC:} 47B35, 47A68, 45E05, 45E10.


\section{Introduction}
\label{sec:1}

Wiener-Hopf factorisation, which was developed mainly in connection with the study of singular integral equations, convolution type operators and Riemann-Hilbert problems that are important in a variety of areas in Mathematics, Physics and Engineering, plays a prominent role in the theory of Toeplitz operators. Indeed many spectral properties, Fredholmness, invertibility and formulae for the inverses, when they exist, can be expressed in terms of a Wiener-Hopf (WH for short) factorisation of their symbols.

\vspace {3mm}
Several monographs have appeared in the last four decades (\cite{BSilb, GF, GK,LS, MP, Pross} for instance) intending to present, as systematically and completely as possible, the myriad of results that kept appearing on WH factorisation and its relations with singular integral equations, boundary value problems and Toeplitz operators. However, the enormous quantity of results and the extension of most monographs make it difficult for someone not familiar with this topic to come to grips with the vast amount of information contained there and quickly cut through to get to the heart of the matter.

\vspace {3mm}
The present article is based on a mini-course given in the context of the 13th Advanced Course in Operator Theory and Complex Analysis held in Lyon in 2016. Its aim is to introduce the reader to a set of concepts and results enabling one to rapidly understand the essential ideas behind the notion of WH factorisation and its usefulness in the study of Toeplitz operators. 

\vspace {3mm}
It would be clearly impossible to cover in a single article this vast domain, so the purpose of this paper required choices regarding which results to present, their setting, how to show their interrelations and how they build upon each other. This was done based on the author's own experience in learning the subject, as well as the author's research interests.

\vspace {3mm}
First, it should be explicitly noticed that only scalar-valued symbols will be considered. The factorisation of matrix-valued functions and the study of Toeplitz operators with matrix symbols involve considerably more difficulties that would overshadow the exposition in a first approach.

\vspace {3mm}
Secondly, we study here Toeplitz operators defined in the Hardy spaces of the (upper) half-plane, $H^p(\mathbb C^+)$, with $1<p<\infty$. The existing literature overwhelmingly puts its emphasis on factorisation relative to a closed contour such as the unit circle and on Toeplitz operators defined in the Hardy spaces of the disk, whether in the context of $H^2$ or $H^p\,,\,1<p<\infty$. However, the natural setting in which many problems appear, as for instance those formulated in terms of finite-interval convolution equations, is the real line. Translating the results presented here from the context of the unit circle to that of the real line, for $p\neq 2$, would require considering weighted Hardy spaces of the disk (see \cite {CDR, MP}, for instance).
By considering the half-plane setting from the start, on the one hand, we avoid this; on the other hand, it allows for a better understanding of the reasoning and the techniques involved when the real line is considered, provides a direct approach to problems that are naturally formulated in the context of $\RR$, and allows the use of tools such as the Fourier transform.

 \vspace {3mm} 
The paper is organised as follows. In Section 2 the fundamental function spaces used in the paper and their main properties are described. Toeplitz operators and some of their basic properties
are presented in Section 3, and their relation to paired operators is described in Section 4. In Section 5 the Fredholm properties and invertibility of a Toeplitz operator are characterised in terms of a WH factorisation of its symbol. Piecewise continuous symbols are studied in Section 6, where criteria for existence of a WH factorisation of those functions are presented and explicit formulae for the factors are obtained in a particular case. The results of the previous sections are used in Section 7 to study the spectrum of the singular integral operator $S_J$ in three cases, $J=\mathbb R\,,\,J=\mathbb R^+\,,\,J=[0,1]$, and to obtain an expression for the resolvent operator $(S_{\mathbb R^+}-\lambda\,I_{\mathbb R^+})^{-1}$ with $\lambda\in\mathbb C\setminus[-1,1]$.

 \vspace {3mm} 
 The main references for Section 2 are \cite{Du, duren, O, W}; for Section 3, \cite{BSilb, GK, W}; for Section 4, \cite{BSilb, GGK, Pross}; for Section 5, \cite{Du, MP}; for Sections 6 and 7, \cite{Du, W}.


\section{Spaces of functions}

For $0< p<\infty$, let $L_p$ denote the Lebesgue space of all complex Lebesgue measurable functions $f$ which are $p$-integrable on $\mathbb R$, with the norm
\[
|| f ||_p= (\int_\RR |f(x)|^p dx)^{1/p}.
\]
We denote by $H_p^+$ the Hardy space of all functions $f$ which are analytic in the upper half plane $\mathbb C^+$ such that, for all $y>0$, $|f(x+iy)|^p$ is integrable over $\mathbb R$  and there is a constant $M\in \mathbb R^+$ such that
\[
\int_\RR |f(x+iy)|^p dx<M \,\,\,\,{\rm for \, all}\, \,y>0,
\]
with the norm
\[
|| f ||_{H_p^+}=\sup_{y\in \RR^+}\left (\int_\RR |f(x+iy)|^p dx\right )^{1/p}
\]
(\cite{duren}). If $f_+\in H_p^+$ then the nontangential boundary function
\[
\tilde f_+(x)=\displaystyle{\lim_{z \to x,z\in \CC^+}}f_+(z)
\]
is defined a.e. on $\RR$ and belongs to $L_p$, with $||\tilde f_+||_p=||f_+||_{H_p^+}$.

In what follows, we assume that $1\leq p<\infty$. We have
\begin{equation}
\label{2.1}
\frac{1}{2\pi i}\int_\RR \frac{\tilde f_+(t)}{t-z}dt= \left\{\begin{array}{c}
                          f_+(z),\,\mbox{if}\,\,z\in\CC^+ \\
                             \\
                          0,\,\mbox{if}\,\,z\in\CC^-.
                        \end{array}
 \right..
\end{equation}
Conversely, if $\tilde f_+\in L_p$ and $\int_\RR \frac{\tilde f_+(t)}{t-z}dt=0$ for all $z\in \CC^+$, then $f_+$ defined by 
\[
f_+(z)=\frac{1}{2\pi i}\int_\RR \frac{\tilde f_+(t)}{t-z}dt \,\,\,\mbox{for}\,\,z\in \CC^+
\]
belongs to $H_p^+$ and its boundary value on $\RR$ is $\tilde f_+$.

We define $H_p^-$ similarly for the lower half plane $\CC^-$ and analogous results hold for $f_-\in H_p^-$. In particular, if $\tilde f_-$ is the (nontangential) boundary value of $f_-$ on $\RR$ then
\begin{equation}
\label{2.2}
-\frac{1}{2\pi i}\int_\RR \frac{\tilde f_-(t)}{t-z}dt= \left\{\begin{array}{c}
                          f_-(z),\,\mbox{if}\,\,z\in\CC^- \\
                             \\
                          0,\,\mbox{if}\,\,z\in\CC^+.
                        \end{array}
 \right..
\end{equation}

Identifying each function in $H_p^\pm$ with its boundary value on $\RR$, we have that $H_p^+$ and $H_p^-$ are closed subspaces of $L_p$ and the following holds:
\begin{equation}
\label{2.3}
H_p^+\cap H_p^-=\{0\}
\end{equation}

\begin{equation}
\label{2.4}
\overline{H_p^+}=H_p^-
\end{equation}
and for $p,r\in ]1,\infty[$
\begin{equation}
\label{2.5}
f\in H_p^\pm,\, g\in H_r^\pm\Rightarrow fg\in H_s^\pm\,\,\,\mbox{with}\,\,\frac{1}{s} =\frac{1}{p}+\frac{1}{r}\,.
\end{equation}
By $L_\infty$ we denote the space of all essentially bounded functions on $\RR$, with the norm
\[
||f||_\infty=\operatorname{ess}\sup\limits_{x\in\RR}\,|f(x)|\,,
\]
and by $H_\infty^\pm$ the space of all functions which are analytic and bounded in $\CC^\pm$. We identify $H_\infty^\pm$ with the subspaces of $L_\infty$ consisting of their (nontangential) boundary functions on $\RR$. We have
\begin{equation}
\label{2.6}
a_\pm\in H_\infty^\pm\,,\,f_\pm \in H_p^\pm\Rightarrow a_\pm f_\pm \in H_p^\pm\,\,\,,\,\,\,\mbox {for} \,\,\,1\leq p\leq \infty\,,
\end{equation}
\begin{equation}
\label{2.7}
H_\infty^+\cap H_\infty^-=\CC\,.
\end{equation}
We also define, for $1\leq p<\infty$\,,
\begin{equation}
\label{2.8}
\mathcal H_p^\pm=\lambda_\pm H_p^\pm\,\,,\,\,\,\mbox{where}\,\,\lambda_\pm(x)=x\pm i
\end{equation}
(\cite{W}) and we have
\begin{equation}
\label{2.9}
H_p^\pm \subset \mathcal H_p^\pm\,\,,\,\,   H_p^\pm\subset \mathcal H_1^\pm\,\,,\,\,     H_\infty^\pm\subset \mathcal H_p^\pm\,\,\mbox {for\,\,all}\,\,p>1\,;
\end{equation}
\begin{equation}
\label{2.10}
\overline{\mathcal H_p^+}=\mathcal H_p^-\,\,\mbox {for\,\,all}\,\,p\geq 1\,;
\end{equation}
\begin{equation}
\label{2.11}
\mathcal H_p^+ \cap \mathcal H_p^-=\CC \,\,\mbox {for\,\,all}\,\,p>1\,;
\end{equation}
\begin{equation}
\label{2.12}
\mathcal H_1^+ \cap \mathcal H_1^-=\{0\}\,\,,\,\,\lambda_+ \mathcal H_1^+\cap\lambda_- \mathcal H_1^-=\CC\,;
\end{equation}
\begin{equation}
\label{2.13}
H_p^\pm \mathcal H_{p'}^\pm \subset \mathcal H_1^\pm\,\,\,\mbox{where}\,\,\frac{1}{p}+\frac{1}{p'}=1\,,\,p>1.
\end{equation}

By the Luzin-Privalov theorem, if $f$ is meromorphic in $\CC^+$ (resp. $\CC^-$) and has nontangential boundary value $0$ on a set $E\subset \RR$ with positive measure, then $f(z)=0$ for all $z\in \CC^+$ (resp. $\CC^-$). Thus if a function $f$ belonging to $ H_p^\pm$ or to $\mathcal H_p^\pm$ ($1\leq p \leq \infty$) vanishes on a set of positive measure $E\subset \RR$ then $f$ is identically zero.

\vspace{5mm}
By $C (\RR_\infty)$ we denote the space of all continuous functions on $\RR_\infty=\RR \cup \{\infty\}$, with the supremum norm, and by $\mathcal R$ the set of all rational functions without poles on $\RR_\infty$. The set of all piecewise continuous functions on $\RR_\infty$, i.e., $f\in L_\infty (\RR)$ such that $f$ is continuous on $\RR_\infty$ with the possible exception of finitely many points and with finite limits $f(\pm\infty)\,,\,f(x^\pm)$ for all $x\in \RR$, is denoted by $PC(\RR_\infty)$.

\vspace{5mm}
If $\mathcal A$ is an algebra, we denote by $\mathcal G\mathcal A$ the group of all invertible elements in $\mathcal A$.


\section{Toeplitz operators in $H_p^+$}

Let now $p\in ]1,\infty[$. For $f \in L_p$, the equality

\beq
S_\RR\,f(x)=\frac{1}{i\pi}\int_\RR \frac{f(t)}{t-x} dt,
\eeq
where the integral is understood as a Cauchy principal value, defines, by a classical theorem of M. Riesz (\cite{R}), a bounded operator in $L_p$. We have
\beq
\label{3.1}
S_\RR^*=S_\RR\,\,,\,\,S_\RR^2=I
\eeq
\beq
\label{3.2}
\overline{S_\RR f}=-S_\RR\bar f\,\,,\,\,f\in L_p
\eeq
\beq
\label{3.3}
S_\RR a-aS_\RR\,\, \mbox{is compact if}\,\,a\in C(\RR_\infty).
\eeq
We associate with $S_\RR$ two complementary projections
\beq
\label{3.4}
P^\pm=\frac{I\pm S_\RR}{2}.
\eeq
By the Sokhotski-Plemelj formulae, $P^\pm f$ are the nontangential boundary values on $\RR$ of the analytic functions
\beq
\pm\frac{1}{2\pi i}\int_\RR \frac{f(t)}{t-z} dt\,\,,\,\,z\in \CC^\pm\,,
\eeq
respectively. Thus $P^\pm L_p=H_p^\pm$ and $\ker P^\pm=H_p^\mp$. In terms of these projections, \eqref{2.6} can be expressed by
\beq
\label{3.4A}
P^\pm a_\pm P^\pm=a_\pm P^\pm\,\,,\,\,\mbox{if}\,\, a_\pm \in H_\infty^\pm\,.
\eeq
It follows from \eqref{3.2} and \eqref{2.4} that, if $C$ denotes complex conjugation $(C\,f=\bar f)$ then
\beq
\label{3.5}
C P^\pm = P^\mp C\,\,,\,\,\mbox{i.e.},\,\, C\,P^\pm C=P^\pm
\eeq
and, from \eqref {3.3}, that $P^\pm a - aP^\pm$ is compact in $L_p$ if $a\in C(\RR_\infty)$.

\vspace{5mm}
The Toeplitz operator with symbol $g\in L_\infty$, $T_g$,  is defined in $H_p^+$ by
\beq
\label{4.1}
T_g\,f_+=P_+g\,f_+\,\,\,\,,\,\,f_+\in H_p^+.
\eeq
Whenever we want to make the domain $H_p^+$ explicit, we will use the notation $T_{g,p}$. We have $||g||_\infty \leq ||T_g|| \leq C\,||g||_\infty$ (\cite{Du}), where $C=1$ if $p=2$. Thus $T_g=0$ if and only if $g=0$.

The following properties hold:
\begin{eqnarray}
&&\mbox{(P1)}\,\,\,\,\,\,(T_{g,p})^*=T_{\bar g,p'}\,\,\,\,\,\mbox{where}\,\,\frac{1}{p}+\frac{1}{p'}=1\,;
\nonumber\\
&&\mbox{(P2)}\,\,\,\,\,\,\mbox{if}\,\, a_\pm \in H_\infty^\pm\,\,\mbox{and} \,\,g\in L_\infty,\,\,\mbox{then}\,\, T_{a_-}T_g=T_{ga_-}\,\,,\,\,T_gT_{a_+}=T_{ga_+};
\nonumber\\
&&\mbox{(P3)}\,\,\,\,\,\,\mbox{if}\,\, a_\pm \in \mathcal G\,H_\infty^\pm\,\,\mbox{then} \,\,T_{a_\pm}\,\,\mbox{is invertible and}\,\, (T_{a_\pm})^{-1}=T_{(a_\pm)^{-1}}.
\nonumber
\end{eqnarray}
Note that in general $(T_g)^{-1}\neq T_{g^{-1}}$.

\vspace{5mm}
For $f_+\in H_p^+$ we have that
\beq
\label{4.1A}
f_+\in \ker T_g\Leftrightarrow g\,f_+=f_-\in H_p^-
\eeq
where the equality holds on $\RR$.
Defining $r$ by
\begin{equation}\label{2.15}
r(x)=\frac{x-i}{x+ i}\;,\;\;x\in \RR,
\end{equation}
it follows from \eqref{4.1A} that, for $k\geq 0$,
\beq
\label{4.1B}
\ker T_{r^k}=\{0\}\,\,\,,\,\,\dim\ker T_{r^{-k}}=k.
\eeq
On the other hand from (P2) we have, for $g\in L_\infty\,,\,k\in \ZZ$,
\beq
\label{4.2}
 T_{gr^k}=T_gT_{r^k}\,\,,\,\,\,\mbox{if}\,\,k\geq 0
\eeq
\beq
\label{4.3}
 T_{gr^k}=T_{r^k}T_g\,\,,\,\,\,\mbox{if}\,\,k\leq 0. 
\eeq
From \eqref{4.1A}-\eqref{4.3} we conclude the following.

\begin{prop}
\label{Prop 4.A}
Let $k\in \ZZ$ and let $r$ be given by \eqref{2.15}.

(i) $T_{r^{k}}$ is left invertible if $k\geq 0$, with left inverse $(T_{r^{k}})_l^{-1}=T_{r^{-k}}$; $\dim\ker T_{r^{k}}=0\,,\,\dim\ker T_{r^{k}}^*=k$.

(ii) $T_{r^{k}}$ is right invertible if $k\leq 0$, with right inverse $(T_{r^{k}})_r^{-1}=T_{r^{-k}}$; $\dim\ker T_{r^{k}}=|k|\,,\,\dim\ker T_{r^{k}}^*=0$.

(iii) $T_{r^{k}}$ is Fredholm with index $-k$, and it is invertible if and only if $k=0$.
\end{prop}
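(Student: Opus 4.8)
The plan is to derive the whole statement from the four ingredients already in hand: the kernel computation \eqref{4.1B}, the shift identities \eqref{4.2}--\eqref{4.3}, the adjoint rule (P1), and the elementary fact that $r$ is unimodular on $\RR$. Nothing deeper than these should be needed.

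First I would read off the one-sided inverses directly from \eqref{4.2}--\eqref{4.3}. For $k\ge 0$, taking $g=r^{-k}$ in \eqref{4.2} gives $T_{r^{-k}}T_{r^k}=T_{r^{-k}r^k}=T_1=I$, so $T_{r^k}$ is left invertible with left inverse $T_{r^{-k}}$; for $k\le 0$, taking $g=r^{-k}$ in \eqref{4.3} gives $T_{r^k}T_{r^{-k}}=T_1=I$, so $T_{r^k}$ is right invertible with the same formula. A left invertible bounded operator is bounded below, and a right invertible operator is surjective, so in either case the range of $T_{r^k}$ is closed; this is precisely what will license the use of Fredholm theory at the end.

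Next I would pin down the two defect numbers. The dimension of $\ker T_{r^k}$ is immediate from \eqref{4.1B}: it is $0$ for $k\ge 0$ and $|k|$ for $k\le 0$. For the cokernel I would pass to the adjoint. Since $r(x)=(x-i)/(x+i)$ is unimodular on $\RR$, on the line one has $\bar r=1/r=r^{-1}$, hence $\overline{r^{k}}=r^{-k}$; combined with (P1) this yields $(T_{r^k,p})^*=T_{r^{-k},p'}$. For a closed-range operator the cokernel dimension equals $\dim\ker(T_{r^k})^*=\dim\ker T_{r^{-k},p'}$, which I evaluate by \eqref{4.1B} applied now in $H_{p'}^+$: it is $k$ when $k\ge 0$ (since then $-k\le 0$) and $0$ when $k\le 0$.

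Finally I would assemble the statement. For $k\ge 0$ the kernel is trivial and the cokernel has dimension $k$; for $k\le 0$ the kernel has dimension $|k|$ and the cokernel is trivial. In both regimes both defect numbers are finite and the range is closed, so $T_{r^k}$ is Fredholm, with $\mathrm{index}=\dim\ker T_{r^k}-\dim\ker(T_{r^k})^*=-k$, and it is invertible exactly when both defects vanish, i.e. when $k=0$ (consistently, $T_{r^0}=I$). I expect no serious obstacle: the argument is entirely driven by the cited facts. The one point requiring genuine care is the adjoint bookkeeping — using unimodularity of $r$ to rewrite $\overline{r^k}$ as $r^{-k}$ and then reapplying the kernel formula \eqref{4.1B} in the dual exponent $p'$ — together with the observation that the one-sided invertibility established in the first step is exactly what supplies the closed range needed to identify $\dim\,\mathrm{coker}\,T_{r^k}$ with $\dim\ker(T_{r^k})^*$.
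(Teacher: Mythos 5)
Your proposal is correct and follows essentially the route the paper intends: the paper offers no separate proof, stating only that the proposition follows from \eqref{4.1A}--\eqref{4.3}, and your argument is precisely the fleshing-out of that — one-sided inverses from \eqref{4.2}--\eqref{4.3} with $g=r^{-k}$, kernel dimensions from \eqref{4.1B}, the adjoint identified via (P1) and unimodularity of $r$, and closed range from one-sided invertibility to get the Fredholm index $-k$.
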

Recall that an operator $T$ is Fredholm if and only if its kernel and the kernel of its adjoint are finite dimensional, and the range of $T$ is closed; the Fredholm index of $T$ is $Ind\, T=\dim \ker T-\dim \ker T^*$. We will see in the next section that, in a certain sense, all Toeplitz operators which are Fredholm can be reduced, through an appropriate factorisation of their symbol, to a Toeplitz operator with a simple rational symbol such as the one considered in Proposition \ref{Prop 4.A}. From the latter we see moreover that: (i) either $\dim\ker T_{r^k}=0$ or $\dim\ker T_{r^k}^*=0$\,; (ii) $T_{r^k}$ is always one sided invertible. We will also show in the next theorem that property (i) is shared by all non-zero Toeplitz operators, while (ii) holds for all Toeplitz operators which are Fredholm.

We have:
\begin{thm}
\label{Prop 4.B}
(Coburn's Lemma) If $g\in L_\infty\,,\,g\neq 0$, then either $\ker T_g=\{0\}$ (in $H_p^+$) or $\ker T_g^*=\{0\}$ (in $H_{p'}^+$).
\end{thm}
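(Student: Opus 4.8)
The plan is to argue by contradiction: assuming both $\ker T_g\neq\{0\}$ in $H_p^+$ and $\ker T_g^*\neq\{0\}$ in $H_{p'}^+$, I will deduce $g=0$. The first step is to convert both hypotheses into factorisation identities on $\RR$. By \eqref{4.1A} there is a nonzero $f_+\in H_p^+$ with $g f_+=f_-\in H_p^-$. For the adjoint, property (P1) identifies $T_g^*$ with $T_{\bar g,p'}$ acting on $H_{p'}^+$; hence $\ker T_g^*\neq\{0\}$ yields a nonzero $h_+\in H_{p'}^+$ with $\bar g\,h_+=h_-\in H_{p'}^-$. Taking complex conjugates and using \eqref{2.4} turns this into
\[
g\,\overline{h_+}=\overline{h_-},\qquad \overline{h_+}\in H_{p'}^-,\ \ \overline{h_-}\in H_{p'}^+.
\]

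The decisive step, and the one I expect to be the real obstacle to discover, is to eliminate $g$ by cross-multiplying the two identities while arranging the exponents so that a uniqueness principle becomes available. Multiplying $g f_+=f_-$ by $\overline{h_+}$ and $g\,\overline{h_+}=\overline{h_-}$ by $f_+$, and comparing the two resulting left-hand sides, gives
\[
f_+\,\overline{h_-}=f_-\,\overline{h_+}.
\]
Now $f_+\in H_p^+$ and $\overline{h_-}\in H_{p'}^+$, so by \eqref{2.5} with $\tfrac1p+\tfrac1{p'}=1$ (hence $s=1$) the left-hand side lies in $H_1^+$; symmetrically, $f_-\in H_p^-$ and $\overline{h_+}\in H_{p'}^-$ place the right-hand side in $H_1^-$. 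The point of passing through the adjoint and conjugating was precisely to produce a second factorisation with the conjugate exponent $p'$, so that the H\"older pairing lands both products in $H_1^\pm$ exactly. By \eqref{2.3} with $p=1$ we have $H_1^+\cap H_1^-=\{0\}$, so both sides vanish:
\[
f_+\,\overline{h_-}=0=f_-\,\overline{h_+}\quad\text{a.e. on }\RR.
\]

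It then remains to propagate these vanishing products back to $g$ using the Luzin-Privalov uniqueness property recorded earlier, by which a nonzero element of any of these Hardy spaces is nonzero almost everywhere. Since $f_+\neq 0$ it is nonzero a.e., so $f_+\overline{h_-}=0$ forces $\overline{h_-}=0$, i.e.\ $h_-=0$; then $\bar g\,h_+=h_-=0$ with $h_+$ nonzero a.e.\ gives $\bar g=0$, hence $g=0$, contradicting $g\neq 0$. (Equivalently one may read off $f_-=0$ from $f_-\overline{h_+}=0$ and conclude $g f_+=0$, again giving $g=0$.) Thus the two kernels cannot both be nontrivial, which is the assertion.
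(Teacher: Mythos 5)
Your proposal is correct and follows essentially the same route as the paper: pass to $T_{\bar g}$ via (P1), cross-multiply the two identities to get $f_+\overline{h_-}=f_-\overline{h_+}$, place the two sides in $H_1^+$ and $H_1^-$ respectively so both vanish, and finish with the Luzin--Privalov uniqueness property. If anything, your final step is slightly more careful than the paper's, since you explicitly derive $g=0$ from $h_-=0$ rather than stopping at the vanishing of a factor.
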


\begin{proof}
Suppose that there are $f_+\in H_p^+\,,\,h_+\in H_{p'}^+$, different from $0$, such that $f_+\in \ker T_g\,,\,h_+\in \ker T_g^*=\ker T_{\bar g}$. This means that there are $f_-\in H_p^-\setminus \{0\}\,,\,h_-\in H_{p'}^-\setminus \{0\}$ such that
\beq
\nonumber
gf_+=f_-\,\,\,,\,\bar g h_+=h_-.
\eeq
Thus
\beq
\label{4.4}
 f_-\overline{h_+}=gf_+\overline{h_+}=f_+g\overline{h_+}=f_+\overline{h_-}.
\eeq
Since the left hand side of \eqref{4.4} represents a function in $H_1^-$ and the right hand side represents a function in $H_1^+$, we conclude that both are zero. By the Luzin-Privalov theorem either $f_+$ or $\overline{h_-}$ must be zero, which is impossible since $f_+\,,\,h_+\neq 0$.
\end{proof}
Coburn's Lemma can also be stated as follows: a non-zero Toeplitz operator has a trivial kernel or a dense range.

\vspace{5mm}
A necessary and sufficient condition for an operator $A\in \mathcal L(X,Y)$, where $X$ and $Y$ are Banach spaces, to have a left (resp. right) inverse is that $\ker A=\{0\}$ and $\mbox{Im}\, A$ is closed and complemented in $Y$ (resp., $\mbox{Im}\, A=Y$ and $\ker A$ is complemented in $X$) (\cite {GGK}). Therefore we have:

\begin{cor}
\label{Cor 4.C}
Let $T_g$ be a Toeplitz operator in $H_2^+$. If $\mbox{Im}\,T_g$ is closed, then $T_g$ is (at least) one sided invertible.
\end{cor}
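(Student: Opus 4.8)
The plan is to combine Coburn's Lemma (Theorem~\ref{Prop 4.B}) with the left/right invertibility criterion recalled just above, exploiting the fact that $H_2^+$ is a Hilbert space, in which every closed subspace is complemented by its orthogonal complement. Since Coburn's Lemma requires a nonzero symbol, I would first note that we may assume $g\neq 0$. That lemma then yields a dichotomy: either $\ker T_g=\{0\}$ (in $H_2^+$) or $\ker T_g^*=\{0\}$ (in $H_2^+$), and I would treat these two cases in turn.

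First, suppose $\ker T_g=\{0\}$. Then $T_g$ meets the hypotheses of the left-invertibility criterion: its kernel is trivial, its range $\mbox{Im}\,T_g$ is closed by assumption, and, $H_2^+$ being a Hilbert space, this closed range is automatically complemented. Hence $T_g$ is left invertible.

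Next, suppose $\ker T_g^*=\{0\}$. Here I would invoke the standard Hilbert-space identity $\overline{\mbox{Im}\,T_g}=(\ker T_g^*)^\perp$, equivalently $\ker T_g^*=(\mbox{Im}\,T_g)^\perp$, where $T_g^*$ is the Hilbert-space adjoint; note that for $p=2$ property (P1) gives $T_g^*=T_{\bar g}$ acting on $H_2^+$. Since $\ker T_g^*=\{0\}$, the range is dense, and since $\mbox{Im}\,T_g$ is closed by assumption it must exhaust $H_2^+$; thus $T_g$ is surjective. Its kernel, being closed, is again complemented in the Hilbert space $H_2^+$, so the right-invertibility criterion applies and $T_g$ is right invertible.

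The argument is short, and there is no genuine obstacle beyond recognising the two places where the Hilbert-space structure is essential: the automatic complementedness of closed subspaces, and the identification of $\ker T_g^*$ with $(\mbox{Im}\,T_g)^\perp$, which converts ``trivial cokernel plus closed range'' into surjectivity. It is worth stressing that this is precisely where $p=2$ enters, for in general $H_p^+$ with $p\neq 2$ closed subspaces need not be complemented, so the passage from closed range to one-sided invertibility would no longer be automatic.
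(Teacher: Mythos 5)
Your argument is correct and is exactly the one the paper intends: the corollary is stated as an immediate consequence of Coburn's Lemma together with the one-sided invertibility criterion recalled just before it, and you have simply filled in the Hilbert-space details (automatic complementation and $\ker T_g^*=(\mbox{Im}\,T_g)^\perp$). The only caveat, which the paper shares, is that for $g=0$ the zero operator has closed range but is not one-sided invertible, so the statement tacitly assumes $g\neq 0$ rather than this case being dispensable.
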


\begin{cor}
\label{Cor 4.D}
$T_g$ is invertible if and only if it is Fredholm with index $0$.
\end{cor}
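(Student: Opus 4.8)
The plan is to deduce everything from Coburn's Lemma together with the bare definition of Fredholmness, so almost no computation is needed. The forward direction is routine: if $T_g$ is invertible then it and its adjoint are injective and $T_g$ is onto, so $\ker T_g=\ker T_g^*=\{0\}$ and $\mbox{Im}\,T_g=H_p^+$ is closed, whence $T_g$ is Fredholm with index $\dim\ker T_g-\dim\ker T_g^*=0$. I would dispatch this in a single line.

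For the converse I would first record that $g\neq 0$: if $g=0$ then $T_g=0$ and $\ker T_g=H_p^+$ is infinite dimensional, so $T_g$ could not be Fredholm. This licenses an appeal to Coburn's Lemma (Theorem~\ref{Prop 4.B}), which supplies the dichotomy $\ker T_g=\{0\}$ or $\ker T_g^*=\{0\}$. The key step---and the one place where the hypothesis $\mathrm{Ind}\,T_g=0$ is genuinely used---is to observe that this dichotomy, combined with $\dim\ker T_g=\dim\ker T_g^*$, forces \emph{both} kernels to be trivial: whichever of the two nonnegative integers $\dim\ker T_g$, $\dim\ker T_g^*$ is already known to vanish, equality of the two makes the remaining one vanish as well.

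Once both kernels are trivial I would close as follows. From $\ker T_g=\{0\}$ the operator $T_g$ is injective. From $\ker T_g^*=\{0\}$ together with the general identity $\overline{\mbox{Im}\,T_g}={}^\perp\ker T_g^*$, the range of $T_g$ is dense---this is exactly the ``trivial kernel or dense range'' reformulation of Coburn's Lemma recorded after Theorem~\ref{Prop 4.B}. Since $T_g$ is Fredholm its range is also closed, so $\mbox{Im}\,T_g=H_p^+$ and $T_g$ is onto. A continuous bijection of the Banach space $H_p^+$ onto itself has a continuous inverse by the open mapping theorem, and therefore $T_g$ is invertible.

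I expect the only conceptual obstacle to be that middle step: recognising that the index-zero hypothesis upgrades Coburn's ``one of the two kernels vanishes'' into ``both kernels vanish.'' Everything around it is bookkeeping---the reduction to $g\neq 0$, the closed-range property built into the definition of Fredholmness, and the density-from-adjoint duality---none of which should present any difficulty.
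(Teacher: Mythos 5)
Your proof is correct and takes essentially the same route the paper intends: the corollary is placed immediately after Coburn's Lemma precisely so that the index-zero hypothesis upgrades the dichotomy ``one of the two kernels is trivial'' to triviality of both, after which closedness of the range (from Fredholmness) plus density (from $\ker T_g^*=\{0\}$) gives surjectivity. The paper leaves the argument implicit, but every step you supply, including the dismissal of $g=0$, matches that intended reasoning.
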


By the Hartman-Wintner theorem (\cite {BSilb}), if $T_g$ is semi-Fredholm, i.e., $\mbox{Im}\,T_g$ is closed and $\ker T_g$ or $\ker T_g^*$ are finite-dimensional, then $g\in \mathcal G L_\infty$. Therefore we also have:

\begin{cor}\label{cor3.5}
\label{Cor 4.D}
If $\mbox{Im}\,T_g$ is closed, then $g\in \mathcal G L_\infty$.
\end{cor}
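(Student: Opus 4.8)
The plan is to read this off directly from Coburn's Lemma (Theorem~\ref{Prop 4.B}) together with the Hartman-Wintner theorem just quoted. The key observation is that closedness of $\mbox{Im}\,T_g$ alone is \emph{not} the hypothesis of Hartman-Wintner: that theorem requires semi-Fredholmness, i.e.\ closed range \emph{and} finite-dimensionality of $\ker T_g$ or $\ker T_g^*$. Coburn's Lemma is precisely the device that supplies the missing finiteness.

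First I would set aside the degenerate case $g=0$, for which the stated implication in fact fails, since $\mbox{Im}\,T_0=\{0\}$ is closed yet $0\notin\mathcal G L_\infty$; note that $T_0$ is not semi-Fredholm either, as both $\ker T_0$ and $\ker T_0^*$ are infinite-dimensional. So one assumes $g\neq 0$, which is natural in the stream of results following Coburn's Lemma.

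Next, apply Coburn's Lemma to $g\neq 0$: either $\ker T_g=\{0\}$ in $H_p^+$ or $\ker T_g^*=\{0\}$ in $H_{p'}^+$. In either alternative, one of the two spaces $\ker T_g$, $\ker T_g^*$ is zero-dimensional, and in particular finite-dimensional. Together with the standing hypothesis that $\mbox{Im}\,T_g$ is closed, this is exactly the assertion that $T_g$ is semi-Fredholm. The Hartman-Wintner theorem then applies verbatim and gives $g\in\mathcal G L_\infty$.

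I expect no genuine obstacle beyond recognising that Coburn's Lemma is what upgrades ``closed range'' to ``semi-Fredholm''; once that link is seen, the argument is a one-line chaining of the two cited results. The only point deserving care is the $g=0$ boundary case flagged above, which should either be excluded by hypothesis or handled separately.
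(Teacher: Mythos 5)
Your argument is exactly the paper's intended one: the corollary is stated immediately after Coburn's Lemma and the quotation of the Hartman--Wintner theorem, and the implicit reasoning is precisely that Coburn's Lemma makes one of $\ker T_g$, $\ker T_g^*$ trivial, so closed range upgrades to semi-Fredholmness and Hartman--Wintner applies. Your observation about the degenerate case $g=0$ (where $\mbox{Im}\,T_0=\{0\}$ is closed but $0\notin\mathcal G L_\infty$) is a correct and worthwhile refinement that the paper leaves implicit.
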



\section{Toeplitz operators and paired operators}

It is clear that 
\[
T_g=P^+g\,I_{|H_p^+}= P^+gP^+_{|H_p^+}.
\]
Therefore Toeplitz operators belong to the class of operators of the form
\[
P\,A_{|\mbox{Im}\,P}=P\,A\,P_{\,|\mbox{Im}\,P}
\]
where $P\in \mathcal L(X)$ is a projection in the Banach space $X$ and $A\in \mathcal L(X)$. These operators, which are called {\it operators of Wiener-Hopf (WH) type}, are closely connected with operators in $X$ of the form $PAP+Q$ (where $Q=I-P$) and to the {\it paired operators} $AP+Q$ and $PA+Q$ as follows:

\begin{prop}
\label{Prop 4.1}
Let $X$ be a Banach space, $A\in \mathcal L(X)$, and let $P,Q\in \mathcal L(X)$ be complementary projections, i.e., $P+Q=I$. Then
\[
AP+Q=(PAP+Q)(I+QAP)
\]
\[
PA+Q=(I+PAQ)(PAP+Q)
\]
where $I+QAP$ and $I+PAQ$ are invertible operators with inverses $I-QAP$ and $I-PAQ$, respectively.
\end{prop}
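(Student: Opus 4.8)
The plan is to treat this as a purely algebraic identity in the unital algebra $\mathcal{L}(X)$, driven entirely by the idempotency of the projections and their mutual orthogonality. So the first step I would carry out is to record the relations that $P$ and $Q$ satisfy. Since $P$ is a projection, $P^2=P$, and writing $Q=I-P$ gives $Q^2=(I-P)^2=I-2P+P^2=I-P=Q$, confirming that $Q$ is again a projection, together with the orthogonality relations $PQ=P(I-P)=P-P^2=0$ and, symmetrically, $QP=0$. These four identities ($P^2=P$, $Q^2=Q$, $PQ=QP=0$) are the only facts about the projections that the argument needs.

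The second step is to verify the two factorisations by expanding the right-hand sides. For the first identity, multiplying out $(PAP+Q)(I+QAP)$ produces the four terms $PAP+PAPQAP+QAP+Q^2AP$; the cross term contains the factor $PQ$ in its interior, namely $PA(PQ)AP$, and hence vanishes, while $Q^2AP=QAP$, so the sum collapses to $PAP+QAP+Q=(P+Q)AP+Q=AP+Q$. The second identity is handled identically: expanding $(I+PAQ)(PAP+Q)$ yields a cross term $PAQPAP=PA(QP)AP=0$ and a term $PAQ^2=PAQ$, leaving $PAP+PAQ+Q=PA(P+Q)+Q=PA+Q$.

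The final step is the invertibility claim. I would simply compute $(I+QAP)(I-QAP)=I-QAPQAP$, observe that $QAPQAP=QA(PQ)AP=0$, and conclude that the product equals $I$; the reversed product is computed the same way, so $I+QAP$ is invertible with inverse $I-QAP$. The identical argument, with the vanishing interior factor now being $QP$ inside $PAQPAQ$, shows that $I-PAQ$ is a two-sided inverse of $I+PAQ$.

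There is no genuine obstacle here: the statement is an algebraic identity and every step is a short expansion. The only point requiring any care is the bookkeeping in the products, where one must notice that in each expansion exactly the cross terms survive or die according to whether an interior $PQ$ or $QP$ appears, and these are precisely the products annihilated by orthogonality. I would therefore present the orthogonality relations up front and then let them do all the work.
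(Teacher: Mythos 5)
Your proof is correct: the paper states Proposition~\ref{Prop 4.1} without proof (referring to the standard literature), and your direct algebraic verification from $P^2=P$, $Q^2=Q$, $PQ=QP=0$ is exactly the intended argument, including the observation that $(I\pm QAP)$ and $(I\pm PAQ)$ are mutually inverse because $QAPQAP=PAQPAQ=0$. One typographical slip: in your list of the four terms of $(PAP+Q)(I+QAP)$ the term coming from $Q\cdot I$ should read $Q$ rather than $QAP$, but your subsequent collapse to $PAP+QAP+Q=AP+Q$ uses the correct terms.
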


If $X\,,\,Y\,,\,\tilde X\,,\,\tilde Y$ are Banach spaces, we say that two operators $T:X\to \tilde X$ and $S: Y\to \tilde Y$ are {\it equivalent} ($T\sim S$) if and only if $T=ESF$ where $E,F$ are invertible operators. More generally, $T$ and $S$ are \emph{equivalent after extension} if and only if there exist (possibly trivial) Banach spaces $X_0$, $Y_0$, called \emph{extension spaces}, and invertible bounded linear operators $E:\widetilde{Y}\oplus Y_0\rightarrow \widetilde{X}\oplus X_0$ and  $F:X\oplus X_0\rightarrow Y\oplus Y_0$, such that
\beq \label{III.1}
\left(
  \begin{array}{cc}
    T & 0 \\
    0 & I_{X_0} \\
  \end{array}
\right)=E \left(
  \begin{array}{cc}
    S & 0 \\
    0 & I_{Y_0} \\
  \end{array}
\right) F.
\eeq
In this case we say that $T\simast S$ (\cite{BTsk}) .

From Proposition \ref{4.1} we see that 
\beq
\label{4A}
AP+Q\sim PA+Q\sim PAP+Q.
\eeq
On the other hand, taking for instance the operator $AP+Q$,   we can write
\[
\left(
  \begin{array}{cc}
    AP+Q & 0 \\
    0 & I_{|\{0\}} \\
  \end{array}
\right)=E \left(
  \begin{array}{cc}
    P\,A\,P_{\,|\mbox{Im}\,P} & 0 \\
    0 & Q{\,|\mbox{Im}\,Q} \\
  \end{array}
\right) F
\]
where
 \[
F= \left(
  \begin{array}{cc}
    P & -Q \\
    Q & P \\
  \end{array}
\right)\left(
  \begin{array}{cc}
    I+QAP & 0 \\
    0 & I_{|\{0\}} \\
  \end{array}
\right):X\oplus \{0\}\to {\mbox{Im}\,P\oplus \mbox{Im}\,Q}
\]
and 
\[
E= \left(
  \begin{array}{cc}
    P & Q \\
    -Q & P \\
  \end{array}
\right):{\mbox{Im}\,P\oplus \mbox{Im}\,Q}\to {X\oplus \{0\}}
\]
are invertible; thus each one of the three operators in \eqref{4A} is equivalent after extension to $ P\,A\,P_{\,|\mbox{Im}\,P} $. In particular for $P=P^+\,,\,Q=P^-$ we see that
\beq
\label{4B}
T_g=P^+gP^+_{|H_p^+}\sim P^+gI\,+P^-\simast gP^++P^-.
\eeq

Operators which are equivalent after extension share many properties, namely the following (\cite{BTsk}).

\begin{thm}
\label{4.2}
 Let $T: X\rightarrow \widetilde{X}$, $S: Y\rightarrow \widetilde{Y}$ be operators and assume that $T \simast S$. Then
\begin{enumerate}
  \item $\ker T \simeq \ker S$;
  \item $\imag T$ is closed if and only if $\imag S$ is closed and, in that case,  $\widetilde{X}/\imag T \simeq \widetilde{Y}/\imag S$;
  \item if one of the operators $T$, $S$ is generalised (left, right) invertible, then the other is generalised (left, right) invertible too;
  \item $T$ is Fredholm if and only if $S$ is Fredholm and in that case $\dim \ker T=\dim \ker S$, $\codim \imag T=\codim \imag S$.
\end{enumerate}
\end{thm}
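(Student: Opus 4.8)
The plan is to factor $\simast$ through two elementary operations and handle each separately. Writing $\hat T = T\oplus I_{X_0}$ and $\hat S = S\oplus I_{Y_0}$, the defining identity \eqref{III.1} says exactly that $\hat T = E\hat S F$ with $E,F$ invertible, i.e.\ $\hat T\sim\hat S$ in the sense of genuine equivalence. So it suffices to prove (a) that each of the four properties is transferred by genuine equivalence, and (b) that each property is unchanged when one passes between $T$ and $\hat T$ (and between $S$ and $\hat S$). Concatenating the two steps gives, for instance, $\ker T\simeq\ker\hat T\simeq\ker\hat S\simeq\ker S$, and similarly for the remaining assertions.

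Step (a) is a direct computation with the invertible operators $E,F$. Since $E$ is injective, $\hat T x=0\iff\hat S(Fx)=0$, so $F$ maps $\ker\hat T$ isomorphically onto $\ker\hat S$. Since $F$ is onto, $\imag\hat T=E(\imag\hat S)$, and as $E$ is a homeomorphism this range is closed iff $\imag\hat S$ is, with $E$ inducing an isomorphism $(\widetilde Y\oplus Y_0)/\imag\hat S\to(\widetilde X\oplus X_0)/\imag\hat T$ of the quotients. For the invertibility statements, if $\hat S^-$ realises the relevant relation for $\hat S$ (one of $\hat S\hat S^-\hat S=\hat S$, $\hat S^-\hat S=I$, $\hat S\hat S^-=I$), then $F^{-1}\hat S^-E^{-1}$ realises the corresponding relation for $\hat T$, by a one-line substitution of $\hat T=E\hat S F$. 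Fredholmness, together with the equalities of $\dim\ker$ and $\codim\imag$, then follows from the kernel and cokernel statements.

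Step (b) uses only the block-diagonal structure. One has $\ker\hat T=\ker T\oplus\{0\}\simeq\ker T$ and $\imag\hat T=\imag T\oplus X_0$, the latter being closed in $\widetilde X\oplus X_0$ iff $\imag T$ is closed in $\widetilde X$, with quotient isomorphic to $\widetilde X/\imag T$. In the forward direction a generalised or one-sided inverse of $T$ gives one of $\hat T$ by adjoining the invertible block $I_{X_0}$. For the converse one writes a putative inverse $B$ of $\hat T$ in $2\times2$ block form and reads off the top-left block: the relation $\hat T B\hat T=\hat T$ forces $TB_{11}T=T$, and likewise $B_{11}T=I$ or $TB_{11}=I$ in the one-sided cases, so $B_{11}$ is the required (one-sided) generalised inverse of $T$.

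I expect no serious obstacle: the theorem is essentially a bookkeeping exercise once the reduction in the first paragraph is in place. The only point needing a little care is the converse in Step (b)---recovering an inverse of $T$ from one of $\hat T=T\oplus I_{X_0}$, whose inverse need not be block-diagonal---but this is resolved cleanly by extracting the top-left block as above. Everything else is immediate from the invertibility of $E$ and $F$ and the direct-sum decompositions of kernels and ranges.
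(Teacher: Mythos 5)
Your argument is correct and complete. Note, however, that the paper itself gives no proof of this theorem: it is quoted directly from Bart and Tsekanovski\u\i\ \cite{BTsk}, so there is no in-text argument to compare yours against. Your route is the standard one and it works: the reduction of $T\simast S$ to the genuine equivalence $T\oplus I_{X_0}=E\,(S\oplus I_{Y_0})\,F$, the transfer of kernel, range, quotients and (one-sided, generalised) inverses along $E$ and $F$, and the passage between $T$ and $T\oplus I_{X_0}$ via the block-diagonal structure are all handled properly. In particular you correctly identify the one genuinely non-trivial point --- that a (one-sided or generalised) inverse $B$ of $T\oplus I_{X_0}$ need not be block-diagonal --- and resolve it by extracting the $(1,1)$ block, which the relations $\hat T B\hat T=\hat T$, $B\hat T=I$, $\hat T B=I$ do force to be the corresponding inverse of $T$. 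No gaps.
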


It follows from \eqref{4B} and from Theorem \ref{4.2} that we can reduce the study of many properties of Toeplitz operators to the corresponding study for paired operators. We make use of this in the following theorem, which will be used later.

\begin{thm}
\label{Thm 4.3}
 Let $g\in\mathcal GL_\infty$. Then $T_g$ is invertible in $H_p^+$ if and only if $T_{g^{-1}}$ is invertible in $H_{p'}^+$, where $\frac {1}{p}+\frac {1}{p'}=1$.
\end{thm}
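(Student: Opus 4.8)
The plan is to transfer the problem from Toeplitz operators to the associated paired operators, where the simultaneous passage from $g$ to $g^{-1}$ and from $p$ to $p'$ is realised by a single duality argument. By \eqref{4B} together with Theorem~\ref{4.2}, the operator $T_{g,p}$ is invertible in $H_p^+$ if and only if the paired operator $gP^++P^-$ is invertible in $L_p$ (the kernel and cokernel statements of Theorem~\ref{4.2} turn triviality of kernel and cokernel for one operator into the same for the other). Applying \eqref{4B} with the symbol $g^{-1}$ and with $p$ replaced by $p'$, likewise $T_{g^{-1},p'}$ is invertible in $H_{p'}^+$ if and only if $P^+g^{-1}+P^-$ is invertible in $L_{p'}$. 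Thus it suffices to prove that $gP^++P^-$ is invertible in $L_p$ if and only if $P^+g^{-1}+P^-$ is invertible in $L_{p'}$.

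First I would pass to the transpose with respect to the bilinear pairing $[f,h]=\int_\RR f\,h\,dt$, which identifies $L_{p'}$ with the dual of $L_p$; under this identification a bounded operator on $L_p$ is invertible exactly when its transpose on $L_{p'}$ is. The only computation required is the transpose of the building blocks. Multiplication by $g$ is visibly self-transpose, while combining \eqref{3.1} and \eqref{3.2} (or a direct Fubini computation) shows that $S_\RR$ is \emph{skew} for this pairing, $[S_\RR f,h]=-[f,S_\RR h]$, so that by \eqref{3.4} the transpose of $P^\pm$ is $P^\mp$. Since the transpose reverses order, the transpose of $gP^++P^-$ is $P^-g+P^+$, acting in $L_{p'}$.

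It then remains to identify $P^-g+P^+$ with $P^+g^{-1}+P^-$ up to an invertible factor. Because $g\in\mathcal GL_\infty$, multiplication by $g$ is invertible in $L_{p'}$, and
\[
P^-g+P^+=(P^+g^{-1}+P^-)\,g,
\]
so that $P^-g+P^+$ is invertible if and only if $P^+g^{-1}+P^-$ is. Chaining the three equivalences — equivalence after extension via \eqref{4B} and Theorem~\ref{4.2}, the bilinear transpose, and the invertible factor $g$ — yields the assertion.

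The step I expect to be delicate is the transpose computation: one must use the bilinear pairing $\int f\,h$ rather than the sesquilinear one $\int f\,\bar h$, for otherwise a complex conjugation appears and one is led to $\bar g$ (as in the adjoint identity (P1), $(T_{g,p})^*=T_{\bar g,p'}$) instead of $g^{-1}$. Correspondingly, the symmetry that matters is the skew-symmetry $[S_\RR f,h]=-[f,S_\RR h]$, not the self-adjointness $S_\RR^*=S_\RR$ recorded in \eqref{3.1}. Keeping this bookkeeping straight, and checking that genuine invertibility (and not merely Fredholmness) is preserved under the bilinear transpose, is the heart of the matter; the remaining manipulations are the formal identities above.
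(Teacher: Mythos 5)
Your argument is correct and is essentially the paper's: the paper also reduces via \eqref{4B} and Theorem \ref{4.2} to the paired operator $gP^++P^-$, dualises, and strips off an invertible multiplication by $g$. The only cosmetic difference is that your single bilinear transpose is realised in the paper as the sesquilinear adjoint conjugated by complex conjugation, $C(\,\cdot\,)^*C$, using \eqref{3.5} — exactly the bookkeeping point you flag at the end.
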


\begin{proof}
Taking Theorem \ref{4.2} into account, it is enough to prove that $(gP^++P^-)^*$ is invertible in $H_{p'}^+$ if and only if $g^{-1}P^++P^-$ is invertible in $H_{p'}^+$. Indeed we have, by \eqref{3.5},
\begin{eqnarray}
g^{-1}(P^++gP^-)&&=g^{-1}(CP^-C+gP^-)=g^{-1}C(P^-+\bar gP^+)C
\nonumber\\
&&=g^{-1}C(gP^++P^-)^*C.
\nonumber\\
\end{eqnarray}
\end{proof}


\section{Fredholmness, invertibility and Wiener-Hopf factorisation}

It is easy to see, using properties (P1)-(P3) in Section 3,  that the invertibility properties of $T_{r^k}$ (cf. Proposition \ref{Prop 4.A}) also hold for any Toeplitz operator whose symbol $g$ can be represented as a product

\beq
\label{5.1}
g=g_-r^kg_+\,\,\,,\,\,\mbox{with}\,\,g_\pm\in\mathcal GH_\infty^\pm\,\,,\,\,k\in \mathbb Z.
\eeq
In that case we say that \eqref{5.1} is a {\it bounded WH factorisation} and we have a corresponding factorisation for the Toeplitz operator $T_g$:
\beq
\label{5.2}
T_g=T_{g_-}T_{r^k}T_{g_+}.
\eeq
The integer $k$ is called the {\it index} of the factorisation \eqref{5.1} and the latter is said to be {\it canonical} when $k=0$. Since $T_{g_\pm}$ are invertible, and taking Proposition \ref{Prop 4.A} into account, we have the following.

\begin{thm}
\label{Thm 5.1}
Let $g$ admit a bounded WH factorisation \eqref{5.1}.

(i) If $k=0$ then $T_g$ is invertible with inverse 
\beq
\label{5.3}
(T_g)^{-1}=T_{g_+^{-1}}T_{g_-^{-1}}.
\eeq
(ii) If $k>0$, then $T_g$  is left invertible, $\dim\ker T_g^*=k$ and a left inverse is \beq
\label{5.4}
(T_g)_l^{-1}=T_{r^{-k}}T_{g_+^{-1}}T_{g_-^{-1}}.
\eeq
(iii) If $k<0$, then $T_g$  is right invertible, $\dim\ker T_g=|k|$ and a right inverse is \beq
\label{5.5}
(T_g)_r^{-1}=T_{g_+^{-1}}T_{g_-^{-1}}T_{r^{-k}}.
\eeq
\end{thm}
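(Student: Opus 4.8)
The plan is to reduce every assertion to Proposition \ref{Prop 4.A} by means of the operator factorisation \eqref{5.2}, $T_g = T_{g_-}T_{r^k}T_{g_+}$, which holds by property (P2). First I would record that, since $g_\pm\in\mathcal{G}H_\infty^\pm$, property (P3) makes $T_{g_-}$ and $T_{g_+}$ invertible with $(T_{g_\pm})^{-1}=T_{g_\pm^{-1}}$. Thus \eqref{5.2} presents $T_g$ in the form $E\,T_{r^k}\,F$ with $E=T_{g_-}$ and $F=T_{g_+}$ invertible, i.e. $T_g\sim T_{r^k}$. By Theorem \ref{4.2} (equivalence being equivalence after extension with trivial extension spaces), or simply because pre- and post-composition with invertible operators preserves closedness of the range, kernels, and one-sided inverses, $T_g$ inherits from $T_{r^k}$ the relevant structural data: $\dim\ker T_g=\dim\ker T_{r^k}$ and $\dim\ker T_g^*=\codim\imag T_g=\codim\imag T_{r^k}=\dim\ker T_{r^k}^*$ (using $T_g^*=T_{g_+}^*T_{r^k}^*T_{g_-}^*$ with invertible outer factors for the adjoint count). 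Feeding in the values of Proposition \ref{Prop 4.A} then gives at once the invertibility for $k=0$, left invertibility with $\dim\ker T_g^*=k$ for $k>0$, and right invertibility with $\dim\ker T_g=|k|$ for $k<0$.

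Second I would establish the explicit formulae. For $k=0$ this is immediate: $T_{r^0}=I$ gives $T_g=T_{g_-}T_{g_+}$, so $(T_g)^{-1}=T_{g_+^{-1}}T_{g_-^{-1}}$, which is \eqref{5.3}. For $k\neq 0$ a word of caution is in order, because \eqref{5.4} and \eqref{5.5} are \emph{not} the naive reverse-order products $T_{g_+^{-1}}T_{r^{-k}}T_{g_-^{-1}}$ of the one-sided inverses of the three factors; the factor $T_{r^{-k}}$ has been moved to the outer position. Rather than invoke non-uniqueness of one-sided inverses, I would verify the stated expressions directly.

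For $k>0$ I would compute $(T_{r^{-k}}T_{g_+^{-1}}T_{g_-^{-1}})\,T_g$ via \eqref{5.2}: the inner pair $T_{g_-^{-1}}T_{g_-}$ collapses to $I$, leaving $T_{r^{-k}}T_{g_+^{-1}}T_{r^k}T_{g_+}$. Since $r^k\in H_\infty^+$ for $k>0$, two applications of (P2) give $T_{g_+^{-1}}T_{r^k}T_{g_+}=T_{g_+^{-1}r^kg_+}=T_{r^k}$, and then $T_{r^{-k}}T_{r^k}=T_1=I$ because $r^{-k}\in H_\infty^-$; so \eqref{5.4} is a left inverse. The case $k<0$ is symmetric: now $r^k\in H_\infty^-$, and computing $T_g\,(T_{g_+^{-1}}T_{g_-^{-1}}T_{r^{-k}})$ one collapses $T_{g_+}T_{g_+^{-1}}=I$ and uses (P2) to slide $T_{r^k}$ through $T_{g_-^{-1}}$ and $T_{g_-}$, obtaining $T_{g_-}T_{r^k}T_{g_-^{-1}}=T_{r^k}$ and finally $T_{r^k}T_{r^{-k}}=I$, which confirms \eqref{5.5}.

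The one genuinely delicate point, and the step I expect to be the main obstacle, is precisely this bookkeeping with (P2): each commutation of $T_{r^{\pm k}}$ past a factor $T_{g_\pm^{\mp 1}}$ must respect the analyticity side, so one must use that $r^k$ lies in $H_\infty^+$ exactly when $k>0$ and in $H_\infty^-$ exactly when $k<0$, and pair this membership with the correct one of the two identities in (P2). Getting the half-plane membership and the order of multiplication right is what makes the explicit formulae emerge cleanly; by contrast the structural half of the statement is essentially automatic once \eqref{5.2} and Proposition \ref{Prop 4.A} are available.
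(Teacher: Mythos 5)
Your proposal is correct and follows essentially the same route as the paper: factor $T_g=T_{g_-}T_{r^k}T_{g_+}$ via (P2), use (P3) to see the outer factors are invertible so that the kernel/cokernel counts reduce to Proposition \ref{Prop 4.A}, and verify the one-sided inverse formulae by direct multiplication, e.g.\ $(T_{r^{-k}}T_{g_+^{-1}}T_{g_-^{-1}})T_g=T_{r^{-k}g_+^{-1}r^kg_+}=I_+$ for $k>0$. Your extra care about which half-plane $r^{\pm k}$ lives in and which identity of (P2) applies is exactly the bookkeeping the paper compresses into a single line (and into ``analogously'' for $k<0$), and your remark that $T_{g_+^{-1}}T_{r^{-k}}T_{g_-^{-1}}$ is a different one-sided inverse matches the comment the paper makes right after its proof.
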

\begin{proof}
If $g=g_-g_+$  then \eqref{5.3} holds by (P2) in Section 3. If $k>0$, then $r^k\in H_\infty^+\,,\,r^{-k}\in H_\infty^-$  and we have
\[
(T_{r^{-k}}T_{g_+^{-1}}T_{g_-^{-1}})T_g=T_{r^{-k}}T_{g_+^{-1}}T_{g_-^{-1}}T_{g_-}T_{r^{k}}T_{g_+}=T_{r^{-k}g_+^{-1}r^kg_+}=I_+.
\]
We can prove (iii) analogously.
\end{proof}

Left and right inverses are not unique; for $k\neq 0$ another one sided inverse (left or right depending on whether $k>0$ or $k<0$) is given by $T_{g_+^{-1}}T_{r^{-k}}T_{g_-^{-1}}$.

\begin{defn}
\label{defn 5.3}
(\cite{Du, Pross}) Whenever we associate an integer $k$ to an operator $A$, and $A$ is invertible  if $k=0$, only left invertible if $k\in\mathbb Z^+$ and only right invertible when $k\in\mathbb Z^-$, we say that the invertibility of $A$ corresponds to the integer $k$.
\end{defn}

A class of symbols that always admit a bounded WH factorisation is $\mathcal G \mathcal R$, the set of all rational functions that belong to $\mathcal G L_\infty$. It is easy to see that, for $R\in \mathcal G\mathcal R$, we have $R=R_-\,r^kR_+$ where $R_-^{\pm 1}\in \mathcal R\cap H_\infty^-\,,\,R_+^{\pm 1}\in \mathcal R\cap H_\infty^+$ and $k\in\mathbb Z$ is the winding number of $R$ around the origin, $ind\, R$, also called the index of $R$ with respect to the origin (\cite{Ahl}), which is given by the difference between the number of zeroes and the number of poles of $R$ in $\mathbb C^+$.

Thus, for example,
\[
\frac{(x+2i)^2}{x^2+1}=\frac{x+i}{x-i}\left
(\frac{x+2i}{x+i}\right)^2\,\,\,\,\,\,\,(R_-=1\,,\,k=-1\,,\,R_+=\left(\frac{x+2i}{x+i}\right)^2\,).
\]

As a consequence of Theorem \ref{5.1} and Corollary \ref{Cor 4.D} we have thus:

\begin{cor}
\label{cor 5.4}
Let $R\in\mathcal R$. A necessary and sufficient condition for $T_R$ to be Fredholm in $H_p^+$ is that $\inf_{x\in \RR} |R(x)|>0$. In that case the invertibility of $T_R$ corresponds to $k=\mbox{ind}\,R$ and $\dim\ker T_R^*=k$ if $k>0$, $\dim\ker T_R=|k|$ if $k<0$.
\end{cor}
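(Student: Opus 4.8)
The plan is to split the equivalence into its two directions and, in both, to reduce everything to the tools already in place: the bounded WH factorisation of rational symbols recalled just before the statement, Theorem \ref{Thm 5.1}, and Corollary \ref{cor3.5} (the Hartman--Wintner consequence).

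First I would record the elementary observation that, for $R\in\mathcal R$, the condition $\inf_{x\in\RR}|R(x)|>0$ is equivalent to $R\in\mathcal G\mathcal R$. Since $R$ is rational with no poles on $\RR_\infty$, the function $|R|$ is continuous on the compact space $\RR_\infty$ and has the finite limit $|R(\infty)|$; hence the infimum of $|R|$ over $\RR$ is strictly positive precisely when $R$ has no zeros on $\RR_\infty$, i.e.\ when $R\in\mathcal G\mathcal R$. The same compactness remark shows that the essential infimum of $|R|$ coincides with the ordinary infimum, a point that will be needed in the necessity direction.

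For sufficiency, assuming $\inf_{x\in\RR}|R(x)|>0$ so that $R\in\mathcal G\mathcal R$, I would invoke the factorisation recalled before the statement: $R=R_-\,r^{k}R_+$ with $R_\pm^{\pm1}\in\mathcal R\cap H_\infty^\pm\subset\mathcal G H_\infty^\pm$ and $k=\mathrm{ind}\,R$. This is exactly a bounded WH factorisation in the sense of \eqref{5.1}, so Theorem \ref{Thm 5.1} applies verbatim: $T_R$ is invertible when $k=0$; left invertible with $\dim\ker T_R=0$ and $\dim\ker T_R^*=k$ when $k>0$; and right invertible with $\dim\ker T_R=|k|$ and $\dim\ker T_R^*=0$ when $k<0$. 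In each case both kernels are finite dimensional, and one-sided invertibility forces $\imag T_R$ to be closed, so $T_R$ is Fredholm. Moreover, when $k\neq0$ exactly one of the two kernel dimensions is nonzero, which rules out the opposite-sided invertibility and shows the invertibility is only one-sided, i.e.\ it corresponds to $k$ in the sense of Definition \ref{defn 5.3}.

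For necessity, suppose $T_R$ is Fredholm. Then $\imag T_R$ is closed, so Corollary \ref{cor3.5} gives $R\in\mathcal G L_\infty$, i.e.\ $\operatorname{ess\,inf}_{x\in\RR}|R(x)|>0$; by the first paragraph this essential infimum equals $\inf_{x\in\RR}|R(x)|$, which is therefore positive. The step to handle with care is precisely this necessity argument, since it rests on the Hartman--Wintner theorem packaged in Corollary \ref{cor3.5}; the rest is routine bookkeeping built on Theorem \ref{Thm 5.1} and the explicit rational factorisation.
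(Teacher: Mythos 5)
Your proof is correct and follows essentially the same route as the paper: the paper presents this corollary as an immediate consequence of the explicit bounded WH factorisation $R=R_-r^kR_+$ of $R\in\mathcal G\mathcal R$ together with Theorem \ref{Thm 5.1} (for sufficiency and the one-sided inverses) and the Hartman--Wintner consequence in Corollary \ref{cor3.5} (for necessity), which is exactly how you argue. Your added observations --- that $\inf_{\RR}|R|>0$ is equivalent to $R\in\mathcal G\mathcal R$ by continuity on $\RR_\infty$, and that the essential infimum agrees with the infimum --- are the right details to fill in, and are correct.
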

Other classes of continuous functions, called decomposing algebras with the factorisation property (\cite{CG,GK,MP}), such as the Wiener algebra on the real line $W(\mathbb R_\infty)=\mathbb C+\mathcal F L_1$ and the algebra $C_\mu(\mathbb R_\infty)$ of H\H{o}lder continuous functions with exponent $\mu\in]0,1[$, also possess the property that every invertible element of the algebra admits a bounded WH factorisation \eqref{5.1}. As a consequence, the result of Corollary \ref{cor 5.4} also holds if we replace $\mathcal R$ by $W(\mathbb R_\infty)$ or $C_\mu(\mathbb R_\infty)$ and Theorem \ref{5.1} provides formulae for the inverses, or one sided inverses. 

\vspace{5mm}
Using rational approximation, we can show that the results of Corollary \ref{cor 5.4} still hold when we replace $\mathcal R$ by $C(\mathbb R_\infty)$ and we can also obtain formulae for the inverses, or one sided inverses, of the operator $T_g$. However, in contrast to \eqref{5.3}-\eqref{5.5}, these formulae now involve infinite series of operators (\cite{CG,GK}). Extending the previous results to more general classes of symbols in such a way that we can obtain similarly simple explicit formulae requires generalising the concept of factorisation.
This is done in such a way that the main properties of the factorisation which are used to garantee invertibility of $T_g$, when $g$ admits a canonical bounded WH factorisation $g=g_-g_+$, hold at least in a dense subset of $H_p^+$. We choose the latter to be $\mathcal R_0^+$, consisting of the rational functions belonging to $\mathcal R\cap H_p^+$. Thus we look for a factorisation of the form $g=g_-r^kg_+$, where the factors are no longer required to be bounded but the equalities
\[
P^+\,g_+^{\pm1}P^+=g_+^{\pm1}P^+\,,\,P^-\,g_-^{\pm1}P^-=g_-^{\pm1}P^-
\]
still hold in $\mathcal R_0^+$ (cf. \eqref{2.6}). However, unlike the case when $g_+^{\pm1}$ and $g_-^{\pm1}$ are bounded in $\mathbb C^+$ and $\mathbb C^-$ respectively, this is not enough to garantee the invertibility of $T_g$ when $g=g_-g_+$. Having in mind that $(T_{g_-g_+})^{-1}=T_{g_+^{-1}}T_{g_-^{-1}}=g_+^{-1}P^+g_-^{-1}I_+:H_p^+\to H_p^+$  when $g=g_-g_+$ is a bounded factorisation, we will also require now that the operator $g_+^{-1}P^+g_-^{-1}I_+:\mathcal R_0^+\to H_p^+$  is well defined and admits a bounded extension to $H_p^+$.

\begin{defn}
\label{defn 5.5}
A {\it Wiener-Hopf (or generalised) factorisation} of a function $g\in L_\infty$ with respect to $L_p$ (WH $p$-factorisation for short) is a representation $g=g_-r^kg_+$ with $k\in \mathbb Z$ and
\begin{eqnarray}
\label{5.5A}
&g_+\in \mathcal H_{p'}^+\,,\,g_-\in \mathcal H_{p}^-\\
\label{5.5B}
&(g_+)^{-1}\in \mathcal H_{p}^+\,,\,(g_-)^{-1}\in \mathcal H_{p'}^-\\
\label{5.5C}
&g_+^{-1}P^+g_-^{-1}I_+:\mathcal R_0^+=\mathcal R\cap H_p^+\to H_p^+\,\mbox{is a bounded operator}.
\end{eqnarray}
\end{defn}

The integer $k$ is called the {\it index} of the factorisation, and the latter is said to be {\it canonical} when $k=0$. If a WH $p$-factorisation exists, then it is unique up to a constant factor:

\begin{thm}
\label{Thm 5.6}
Let $g \in L_\infty$  and let $g=g_-r^kg_+$ and $g=\tilde g_- r^{\tilde k}\tilde g_+$ be two WH $p$-factorisations. Then $k=\tilde k$ and there is $C\in \mathbb C \setminus \{0\}$ such that $g_-=C\,\tilde g_-\,,\,g_+=C^{-1}\tilde g_+$.
\end{thm}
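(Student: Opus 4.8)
The plan is to compare the two factorisations factor by factor and show that a single function they determine is forced to be a nonzero constant. Writing $g_-r^kg_+=\tilde g_-r^{\tilde k}\tilde g_+$ as an identity on $\RR$ and cancelling, set
\[
F:=\tilde g_-^{-1}g_-=r^{m}\,\tilde g_+g_+^{-1}=:r^{m}G,\qquad m:=\tilde k-k .
\]
Here $F$ is assembled from minus-functions and $G$ from plus-functions, so the strategy is to show that $F$ lands in $\lambda_-\mathcal H_1^-$ and, for the right sign of $m$, also in $\lambda_+\mathcal H_1^+$; since $\lambda_+\mathcal H_1^+\cap\lambda_-\mathcal H_1^-=\CC$ by \eqref{2.12}, this will force $F$ constant, and a separate look at the exponent will give $m=0$, i.e.\ $k=\tilde k$.

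First I would locate the two products. By \eqref{5.5A}, \eqref{5.5B} and the definition \eqref{2.8}, write $g_-=\lambda_- a$ with $a\in H_p^-$, while $\tilde g_-^{-1}\in\mathcal H_{p'}^-$; then $a\,\tilde g_-^{-1}\in H_p^-\,\mathcal H_{p'}^-\subseteq\mathcal H_1^-$ by \eqref{2.13}, so $F\in\lambda_-\mathcal H_1^-$. Symmetrically, writing $\tilde g_+=\lambda_+ c$ with $c\in H_{p'}^+$ and using $g_+^{-1}\in\mathcal H_p^+$ gives $G\in\lambda_+\mathcal H_1^+$. I would also record that $F$ and $G$ are nonzero a.e.\ on $\RR$, since by \eqref{5.5A}–\eqref{5.5B} each factor has an analytic, hence a.e.\ nonzero, inverse on the relevant half-plane.

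Next I would settle the sign of $m$. Suppose first $m\geq0$. Then $r^m\in H_\infty^+$, and $\lambda_+\mathcal H_1^+$ is invariant under multiplication by such a factor (by \eqref{2.6} with $p=1$), so $F=r^mG\in\lambda_+\mathcal H_1^+$. Together with $F\in\lambda_-\mathcal H_1^-$ this yields $F\equiv C$ for some constant $C$, via \eqref{2.12}, and $C\neq0$ by the nonvanishing just noted. Now $G=Cr^{-m}$, and here lies the crux: $G\in\lambda_+\mathcal H_1^+$ is analytic in $\CC^+$, whereas $r^{-m}$ has a pole of order $m$ at $i\in\CC^+$ (recall \eqref{2.15}); hence $m=0$, so $k=\tilde k$ and $F=G\equiv C$. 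The case $m\leq0$ is identical after passing to the reciprocals $F':=g_-^{-1}\tilde g_-\in\lambda_-\mathcal H_1^-$ and $G':=g_+\tilde g_+^{-1}\in\lambda_+\mathcal H_1^+$, which satisfy $F'=r^{-m}G'$ with $-m\geq0$. Either way $m=0$.

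Finally, reading off the constant, $\tilde g_-^{-1}g_-\equiv C$ gives $g_-=C\tilde g_-$ and $\tilde g_+g_+^{-1}\equiv C$ gives $g_+=C^{-1}\tilde g_+$, which is the assertion. I expect the only delicate point to be the space bookkeeping of the second paragraph: the factors are now unbounded, so one must check that their products fall exactly into $\lambda_\pm\mathcal H_1^\pm$ and no larger space, as it is precisely this borderline membership that makes the intersection identity \eqref{2.12} applicable. It is worth noting that \eqref{5.5A}–\eqref{5.5B} already suffice for uniqueness; condition \eqref{5.5C} is not needed here.
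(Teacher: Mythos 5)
Your proof is correct and follows essentially the same route as the paper's: form the ratio of the two factorisations, place the minus-side product in $\lambda_-\mathcal H_1^-$ and the plus-side product in $\lambda_+\mathcal H_1^+$ via \eqref{2.13}, and conclude with the intersection identities \eqref{2.12} together with a.e.\ non-vanishing. The only cosmetic difference is in the index step: the paper peels off one power of $r$ and divides by $x-i$ so that both sides land in $\mathcal H_1^+\cap\mathcal H_1^-=\{0\}$, giving an immediate contradiction, whereas you first deduce constancy and then rule out $m\neq 0$ via the pole of $r^{-m}$ at $i$ --- both are valid.
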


\begin{proof}
We start by proving the uniqueness of the index. If $k>\tilde k$, then $r^{k-\tilde k}g_+\,\tilde g_+^{-1}=\tilde g_-\,g_-^ {-1}$, which is equivalent to $r^{k-\tilde k-1}\frac{g_+}{x+i}\,\tilde g_+^{-1}=\frac{\tilde g_-}{x-i}\,g_-^ {-1}$. Since $k-\tilde k-1\geq 0$, the left hand side belongs to $\mathcal H_1^+$, the right hand side to $\mathcal H_1^-$; thus both would be zero, which is impossible. We can see analogously that we cannot have $k<\tilde k$. It follows that $g_-\,g_+=\tilde g_-\,\tilde g_+$, which is equivalent to $g_-\,\tilde g_-^{-1}=g_+^{-1}\,\tilde g_+$. Again, since the left hand side belongs to $(x-i)\,\mathcal H_1^-$ and the right hand side to $(x+i)\,\mathcal H_1^+$, we conclude that both sides are equal to a non-zero constant (cf. \eqref{2.12}).
\end{proof}

We now want to prove that $T_g$ is Fredholm in $H_p^+$ if and only if $g$ admits a WH $p$-factorisation, and invertible if and only if this factorisation is canonical. We will need the following lemmas.

\begin{lem}
\label{L5.6}
Let $g_-^{-1}\in \mathcal H_{p'}^-\,,\,r_0\in \mathcal R_0^+$. Then $P^+(g_-^{-1} r_0)\in \mathcal R_0^+$.
\end{lem}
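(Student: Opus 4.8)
The plan is to reduce to a single pole via partial fractions and then extract principal parts. Since $g_-^{-1}\in\mathcal H_{p'}^-$, by \eqref{2.8} I may write $g_-^{-1}=\lambda_- h_-$ with $h_-\in H_{p'}^-$ and $\lambda_-(x)=x-i$. A function $r_0\in\mathcal R_0^+=\mathcal R\cap H_p^+$ is a proper rational function (it vanishes at $\infty$ because $r_0\in H_p^+$) whose poles all lie in $\CC^-$; in particular $\lambda_- r_0\in H_\infty^+$ is bounded on $\RR$, so that $g_-^{-1}r_0=h_-\,(\lambda_- r_0)\in L_{p'}$ and $P^+(g_-^{-1}r_0)$ is well defined. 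By linearity it then suffices to treat $r_0(x)=(x-a)^{-m}$ with $a\in\CC^-$ and $m\geq 1$.

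Second, I would split off the principal part of $g_-^{-1}r_0$ at the pole $a$. Since $g_-^{-1}$ is analytic at $a\in\CC^-$, there are constants $b_1,\dots,b_m$ (built from the Taylor coefficients of $g_-^{-1}$ at $a$) such that
\[
g_-^{-1}(x)(x-a)^{-m}=\sum_{j=1}^m \frac{b_j}{(x-a)^j}+R(x),
\]
where $R$ extends to a function analytic in all of $\CC^-$ (the pole at $a$ is cancelled). Each term $b_j(x-a)^{-j}$ lies in $\mathcal R_0^+$, and since $\mathcal R_0^+\subset H_{p'}^+$ it is fixed by $P^+$. Moreover $R=g_-^{-1}r_0-\sum_j b_j(x-a)^{-j}\in L_{p'}$ as a difference of $L_{p'}$ functions. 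Hence $P^+(g_-^{-1}r_0)=\sum_{j=1}^m b_j(x-a)^{-j}+P^+R$, and the lemma follows once I show $P^+R=0$, i.e. $R\in H_{p'}^-=\ker P^+$.

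The main obstacle is precisely this last step: upgrading ``$R\in L_{p'}$ and analytic in $\CC^-$'' to ``$R\in H_{p'}^-$''. I would use the converse part of \eqref{2.2}: it is enough to check that $\int_\RR R(t)(t-z)^{-1}\,dt$ vanishes for every $z\in\CC^+$. This in turn follows from the analyticity of $R$ in $\CC^-$ together with its decay at infinity---$R$ behaves like $h_-(x)\,x^{1-m}$ for large $|x|$, which for $m\geq1$ is controlled by $h_-\in H_{p'}^-$---allowing one to close the contour in the lower half-plane, where the integrand is analytic. Granting $P^+R=0$, I conclude $P^+(g_-^{-1}r_0)=\sum_{j=1}^m b_j(x-a)^{-j}\in\mathcal R_0^+$, which is the assertion. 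I expect the delicate point to be the justification of the contour closing (equivalently, the $H_{p'}^-$-membership of the remainder), rather than any of the algebraic bookkeeping with the principal parts.
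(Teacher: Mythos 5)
Your proof is correct and follows essentially the same route as the paper: reduce by partial fractions to $r_0=(x-a)^{-m}$ with $a\in\CC^-$, split off the principal part of $g_-^{-1}r_0$ at $a$ (built from the Taylor coefficients of $g_-^{-1}$ there), observe that the principal part lies in $\mathcal R_0^+$ and is fixed by $P^+$, and that the remainder is annihilated by $P^+$. The paper's proof simply asserts this last annihilation without comment; the ``delicate point'' you flag (that the remainder lies in $H_{p'}^-$, via the Cauchy-integral criterion of \eqref{2.2}) is exactly the step the paper leaves implicit, and your sketch of it is the standard and correct justification.
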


\beginpf
Let $r_0(x)=\frac{1}{(x+z_0)^n}$ with $z_0\in \mathbb C^+\,,\,n\in \mathbb N$. We have
\begin{eqnarray*}
&P^+&(g_-^{-1}R_0)=\\
&P^+&\,
  \frac{g_-^{-1}-\displaystyle\sum_{j=0}^{n-1} (g_-^{-1})_{(-z_0)}^{(j)}(x+z_0)^j}{(x+z_0)^n}+\frac{1}{(x+z_0)^n}\displaystyle\sum_{j=0}^{n-1} (g_-^{-1})_{(-z_0)}^{(j)}(x+z_0)^j\\
&=&\frac{1}{(x+z_0)^n}\displaystyle\sum_{j=0}^{n-1} (g_-^{-1})_{(-z_0)}^{(j)}(x+z_0)^j \in \mathcal R_0^+.
\end{eqnarray*}
\endpf

Note that it follows from Lemma \ref{L5.6} that $g_+^{-1}P^+(g_-^{-1} r_0)\in H_p^+$ $r_0\in \mathcal R_0^+$ and therefore $g_+^{-1}P^+g_-^{-1} I_+:\mathcal R_0^+\to H_p^+$ is well defined.

\begin{lem}
\label{L5.7}
Let $g_-\in \mathcal H_{p}^-\,,\,g_-^{-1}\in \mathcal H_{p'}^-\,,\,r_0\in \mathcal R_0^+$. Then 
\[g_-P^-(g_-^{-1} r_0)\in H_p^-.
\]
\end{lem}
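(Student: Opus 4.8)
The plan is to set $\psi_-:=P^-(g_-^{-1}r_0)$ and $\phi_+:=P^+(g_-^{-1}r_0)$ and to show that $F:=g_-\psi_-$ lies in $H_p^-$. First I would check that these projections are even available: writing $g_-^{-1}=\lambda_-v_-$ with $v_-\in H_{p'}^-$ and noting that $\lambda_-r_0$ is rational with no poles on $\RR_\infty$ and bounded at $\infty$ (since $r_0\in\mathcal R_0^+$ vanishes at $\infty$), hence bounded on $\RR$, we get $g_-^{-1}r_0=v_-(\lambda_-r_0)\in L_{p'}$. Thus the decomposition $g_-^{-1}r_0=\phi_++\psi_-$ holds in $L_{p'}$, with $\psi_-\in H_{p'}^-$, while Lemma~\ref{L5.6} gives $\phi_+\in\mathcal R_0^+$.

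Next I would establish two facts about $F=g_-\psi_-=r_0-g_-\phi_+$. For membership in $L_p$: writing $g_-=\lambda_-u_-$ with $u_-\in H_p^-$ and using that $\lambda_-\phi_+$ is bounded on $\RR$ (again because $\phi_+\in\mathcal R_0^+$ vanishes at $\infty$), we get $g_-\phi_+=u_-(\lambda_-\phi_+)\in L_p$, and since $r_0\in L_p$ this yields $F\in L_p$. For the analytic structure: $F=\lambda_-(u_-\psi_-)$ with $u_-\in H_p^-$, $\psi_-\in H_{p'}^-$, so by \eqref{2.5} the product $w:=u_-\psi_-$ lies in $H_1^-$ and $F=\lambda_-w\in\mathcal H_1^-$.

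Finally, I would prove $P^+F=0$, which together with $F\in L_p$ gives $F=P^-F\in H_p^-$. For $z\in\CC^+$, using $\tfrac{t-i}{t-z}=1+\tfrac{z-i}{t-z}$,
\[
\int_\RR\frac{F(t)}{t-z}\,dt=\int_\RR w(t)\,dt+(z-i)\int_\RR\frac{w(t)}{t-z}\,dt.
\]
The second integral vanishes for $z\in\CC^+$ by \eqref{2.2} (since $w\in H_1^-$), and the first vanishes because $\int_\RR w\,dt=0$ for every $w\in H_1^-$ (let $z=iR\to\infty$ in $\int_\RR\frac{w(t)}{t-z}\,dt=0$ and apply dominated convergence, as $w\in L_1$). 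Hence the Cauchy integral of $F$ is identically zero on $\CC^+$, so $P^+F=0$. I expect this last step to be the crux: the cheap bound $F\in\mathcal H_1^-\cap L_p$ only delivers $F\in\mathcal H_p^-$, and recovering the sharper conclusion $F\in H_p^-$ is exactly what the extra cancellation $\int_\RR w=0$ supplies.
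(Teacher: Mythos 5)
Your proof is correct, but it takes a genuinely different route from the paper's. The paper reduces to $r_0(x)=(x+z_0)^{-n}$ and argues by induction on $n$, computing $P^-$ explicitly at each step as a difference quotient: for $n=1$ one gets $g_-P^-(g_-^{-1}r_0)=g_-^{-1}(-z_0)\,\frac{g_-(-z_0)-g_-}{x+z_0}$, and the inductive step expresses $g_-P^-\bigl(g_-^{-1}(x+z_0)^{-n-1}\bigr)$ as a sum of two such difference quotients of $H_p^-$ functions; Lemma~\ref{L5.6} is not used there at all. You instead argue globally: writing $F=g_-P^-(g_-^{-1}r_0)=r_0-g_-P^+(g_-^{-1}r_0)$, you get $F\in L_p$ from Lemma~\ref{L5.6}, get $F=\lambda_-w$ with $w\in H_1^-$ from \eqref{2.5}, and then upgrade to $F\in H_p^-$ by showing the Cauchy integral of $F$ vanishes on $\CC^+$. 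This avoids the induction and isolates a clean general principle, namely $\mathcal H_1^-\cap L_p\subset H_p^-$, at the price of invoking the Cauchy-integral characterisation of $H_p^-$ and the mean-value property $\int_\RR w\,dt=0$ for $w\in H_1^-$. Two small remarks on your final step: to extract $\int_\RR w\,dt=0$ you must first multiply the identity $\int_\RR\frac{w(t)}{t-z}\,dt=0$ by $-z$ (so that $\frac{-z}{t-z}\to 1$ boundedly as $z=iR\to\infty$) before applying dominated convergence --- as literally written, passing to the limit gives only $0=0$; and this cancellation is not really ``extra'' input but holds automatically for every $w\in H_1^-$, so what you have actually proved is the inclusion $\mathcal H_1^-\cap L_p\subset H_p^-$ in general. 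Neither point affects correctness.
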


\beginpf
We prove this by induction considering, without loss of generality, that $r_0(x)=\frac{1}{(x+z_0)^n}$ with $z_0\in \mathbb C^+\,,\,n\in \mathbb N$. For $n=1$ we have
\[g_-\,P^-(g_-^{-1} \frac{1}{x+z_0})=g_-\,\frac{g_-^{-1}-g_-^{-1}(-z_0)}{x+z_0}=g_-^{-1}(-z_0)\,\frac{g_-(-z_0)-g_-}{x+z_0}\,\in H_p^-.
\]
Now assume that $g_-P^-(g_-^{-1} \frac{1}{(x+z_0)^n})\in H_p^-$ and let $F_-=P^-(g_-^{-1} \frac{1}{(x+z_0)^n})$. Then
\begin{eqnarray*}
\nonumber
g_-P^- \frac{g_-^{-1}}{(x+z_0)^{n+1}}\,\,\,\;=\;\;\;g_-\,P^-\left[\left(P^-\frac{g_-^{-1} }{(x+z_0)^n}+P^+ \frac{g_-^{-1}}{(x+z_0)^n}\right)\frac{1}{x+z_0}\right]
\end{eqnarray*}
\begin{eqnarray*}
&=&g_-\,P^-[(P^-\frac{g_-^{-1} }{(x+z_0)^n})\frac{1}{x+z_0}]\\
&=&g_-\,P^-\frac{F_-}{(x+z_0)}=g_-\,\frac{F_--F_-(-z_0)}{(x+z_0)})\\
&=&\frac{g_-\,F_--g_-(-z_0)F_-(-z_0)}{(x+z_0)}+F_-(-z_0)\frac{g_-(-z_0)-g_-}{(x+z_0)}\in H_p^-
\end{eqnarray*}
since both terms on the right hand side belong to $H_p^-$,
\endpf

\begin{thm}
\label {thm 5.8}
Let $g\in L_\infty$. The operator $T_g$ is invertible in $H_p^+$ if and only if $g$ admits a canonical WH $p$-factorisation 
\beq
\label{5.6}
g=g_-\,g_+.
\eeq
\end{thm}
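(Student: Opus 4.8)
The plan is to prove both implications around the single candidate operator $B=g_+^{-1}P^+g_-^{-1}I_+$, since Definition~\ref{defn 5.5} is engineered precisely so that (the bounded extension of) $B$ is the inverse of $T_g$; Lemmas~\ref{L5.6} and~\ref{L5.7} are the technical facts that make computations with $B$ legitimate even though the factors $g_\pm^{\pm1}$ are unbounded.

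For sufficiency, assume $g=g_-g_+$ is a canonical WH $p$-factorisation. By Lemma~\ref{L5.6} the operator $B$ is well defined on $\mathcal R_0^+$, and by~\eqref{5.5C} it extends to a bounded operator on $H_p^+$. I would first verify $T_gB=I$: for $r_0\in\mathcal R_0^+$ one has $T_gBr_0=P^+\bigl(g_-P^+(g_-^{-1}r_0)\bigr)=P^+r_0-P^+\bigl(g_-P^-(g_-^{-1}r_0)\bigr)=r_0$, because $g_-P^-(g_-^{-1}r_0)\in H_p^-$ by Lemma~\ref{L5.7}; density of $\mathcal R_0^+$ then gives $T_gB=I$, so $T_g$ is onto. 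Injectivity I would obtain directly: if $f_+\in\ker T_g$ then $gf_+=f_-\in H_p^-$ by~\eqref{4.1A}, hence $g_+f_+=g_-^{-1}f_-$, and by~\eqref{2.13} the left-hand side lies in $\mathcal H_1^+$ and the right-hand side in $\mathcal H_1^-$, so by~\eqref{2.12} both vanish and $f_+=0$. An injective operator with a bounded right inverse is invertible, and $B=T_g^{-1}$.

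For necessity, assume $T_g$ is invertible. Then $g\in\mathcal GL_\infty$ by Hartman--Wintner (Corollary~\ref{cor3.5}), and $T_{g^{-1}}$ is invertible on $H_{p'}^+$ by Theorem~\ref{Thm 4.3}. I would set $G_+=\lambda_+\,T_g^{-1}(\lambda_+^{-1})$: the relation $P^+(gG_+/\lambda_+)=\lambda_+^{-1}$ gives $gG_+=G_-$ with $G_+\in\mathcal H_p^+$ and $G_-=1+\lambda_+\mu_-\in\mathcal H_p^-$, where $\mu_-\in H_p^-$ and $\lambda_+-\lambda_-=2i$. The dual construction applied to $T_{g^{-1}}^{-1}$ produces $H_+\in\mathcal H_{p'}^+$ and $H_-\in\mathcal H_{p'}^-$ with $g^{-1}H_+=H_-$. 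Then $G_-H_-=gG_+H_-=G_+H_+$, and by~\eqref{2.13} this common value lies in $\lambda_-\mathcal H_1^-\cap\lambda_+\mathcal H_1^+$, which by~\eqref{2.12} equals $\mathbb C$; since none of $G_\pm,H_\pm$ vanishes identically (the inverses are injective and $g$ is invertible) the resulting constant $c$ is nonzero. Hence $G_-^{-1}=c^{-1}H_-\in\mathcal H_{p'}^-$ and $G_+^{-1}=c^{-1}H_+\in\mathcal H_{p'}^+$, so $g_-:=G_-$, $g_+:=G_+^{-1}$ give $g=g_-g_+$ meeting~\eqref{5.5A}--\eqref{5.5B}; condition~\eqref{5.5C} holds because the sufficiency computation shows $g_+^{-1}P^+g_-^{-1}r_0=T_g^{-1}r_0$ on $\mathcal R_0^+$ and $T_g^{-1}$ is bounded.

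The main obstacle throughout is the unboundedness of $g_\pm^{\pm1}$: one cannot apply $P^\pm$ or the multiplier rule (P2) to products such as $g_+f_+$ directly, as these need not lie in $L_p$. Every step must therefore either be confined to $\mathcal R_0^+$, where Lemmas~\ref{L5.6}--\ref{L5.7} keep the intermediate quantities inside Hardy spaces, or be read off from the intersection and product relations~\eqref{2.12} and~\eqref{2.13} for the spaces $\mathcal H_p^\pm$. This is exactly why I would prove $B T_g=I$ indirectly (surjectivity via the explicit right inverse, injectivity via the kernel argument) rather than by a direct projection computation, and why the identification of the overlap constant as a nonzero scalar via~\eqref{2.12} is the pivotal step of the necessity part.
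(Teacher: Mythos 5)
Your proof is correct and follows essentially the same route as the paper: sufficiency via the kernel argument in $\mathcal H_1^\pm$ plus the explicit right inverse $g_+^{-1}P^+g_-^{-1}I_+$ on $\mathcal R_0^+$ (Lemmas \ref{L5.6}--\ref{L5.7} and density), and necessity by solving $T_gu_+=\lambda_+^{-1}$, $T_{g^{-1}}v_+=\lambda_+^{-1}$ and identifying the products as constants through the intersection relation \eqref{2.12}. The only cosmetic differences are that the paper multiplies the two defining equations and reads off both identities at once (so the constant is $1$ by construction, sparing your extra Luzin--Privalov step for $c\neq 0$), and that the containment $\mathcal H_p^+\mathcal H_{p'}^+\subset\lambda_+\mathcal H_1^+$ really follows from \eqref{2.5} and \eqref{2.8} rather than \eqref{2.13}.
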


\beginpf
(i) First we prove that if $g$ admits a factorisation \eqref{5.6} then $T_g$ is invertible. Assume thus that \eqref{5.6} is a canonical WH $p$-factorisation.

Then $T_g$ is injective because
\[
f_+\in \ker T_g\Leftrightarrow gf_+=f_- \,\,\,\mbox{with}\,\,f_-\in H_p^-
\]
and $gf_+=f_-\Leftrightarrow g_+f_+=g_-^{-1}f_-$\,; since the left hand side of the last equality belongs to $\mathcal H_1^+$ while the right hand side belongs to $\mathcal H_1^-$, both are zero and we conclude that $f_+=0$. On the other hand $T_g$ is surjective. To prove this, let $T_0=g_+^{-1}P^+g_-^{-1} I_+:\mathcal R_0^+\to H_p^+$ and let $r_0\in \mathcal R_0^+$. Then, using Lemmas \ref{L5.6} and \ref{L5.7},
\begin{eqnarray*}
T_gT_0r_0^+&=& P^+gP^+g_+^{-1}P^+(g_-^{-1}r_o^+)=P^+g\,g_+^{-1}P^+(g_-^{-1}r_0^+)\\
&=& P^+g_-P^+(g_-^{-1}r_0^+)=r_0^+-P^+g_-P^-(g_-^{-1}r_0^+)=r_0^+.
\end{eqnarray*}
Now take any $\phi_+\in H_p^+$ and let $(r_n^+)$,  with $r_n^+\in \mathcal R_0^+$ for all $n\in \mathbb N$, be a sequence such that $r_n^+\rightarrow \phi_+$ in $H_p^+$. Let moreover $T$ be the continuous extension of $T_0$ to $H_p^+$. We have $T_gTr_n^+=T_gT_0\,r_n^+=r_n^+$ and it follows that $T_gT\phi_+=\phi_+$, i.e., $\phi_+\in \mbox{Im}\,T_g$.

(ii) Conversely, assume that $T_g$ is invertible in $H_p^+$. By Theorem \ref{Thm 4.3} this is equivalent to $T_{g^{-1}}$ being invertible in $H_{p'}^+$, since we must have $g\in\mathcal GL_\infty$ by Corollary \ref{cor3.5}. Let then $u_+\in H_p^+\,,\,v_+\in H_{p'}^+$ be the unique solutions of
\[
T_g\,u_+=\lambda_+^{-1}\,\,,\,\,T_{g^{-1}}\,v_+=\lambda_+^{-1}
\]
where $\lambda_+$ is defined in \eqref{2.8}. Then we have
\beq
\label{5.7}
gu_+=\lambda_+^{-1}\,+u_-\,\,,\,\,g^{-1}v_+=\lambda_+^{-1}\,+v_-\,\,\,\mbox{with}\,\,u_-\in H_p^-\,,\,v_-\in H_{p'}^-\,.
\eeq
Multiplying these two equations we get
\[
\lambda_+\,u_+\,v_+\,-\lambda_+^{-1}=u_-\,+v_-\,+\,\lambda_+\,u_-\,v_-.
\]
Since the left hand side is in $\mathcal H_1^+$ and the right hand side is in $\mathcal H_1^-$ we conclude that both are zero, therefore
\beq
\label{5.8}
\lambda_+\,u_+\,v_+=\,\lambda_+^{-1}\,\,\,,\,\,\,u_-\,+v_-\,\,+\lambda_+\,u_-\,v_-=0.
\eeq
The first equality in \eqref{5.8} implies that $(\lambda_+\,u_+)(\lambda_+\,v_+)=1$; thus, defining $g_+=\lambda_+\,v_+$, we have that $g_+\in\mathcal H_{p'}^+\,,\,g_+^{-1}=\lambda_+\,u_+\in \mathcal H_{p}^+$. From the second equality in \eqref{5.8} we have $(1+\lambda_+\,v_-)(1+\lambda_+\,u_-)=1$. So, defining $g_-=1+\lambda_+\,u_-=1+\lambda_-\,r^{-1}\,u_-$, we have $g_-\in \mathcal H_{p}^-\,,\,g_-^{-1}=1+\lambda_+\,v_-\in\mathcal H_{p'}^-$. From \eqref{5.7}, $g=g_-\,g_+$, so it is left to show that $g_+^{-1}P^+g_-^{-1} I_+:\mathcal R_0^+\to H_p^+$ is bounded.

Let $r_o\in \mathcal R_0^+$ and let $f_+=(T_g)^{-1}r_0$. Then $P^+(gf_+)=r_0$, i.e., $gf_+=r_0\,+P^-(gf_+)$. Now
\begin{eqnarray*}
\nonumber
gf_+=r_0+P^-(gf_+)&\Leftrightarrow& g_-^{-1} gf_+=g_-^{-1}   r_0\,+  g_-^{-1} P^-(gf_+)\\
&\Leftrightarrow& g_+f_+\,-\,P^+(g_-^{-1}r_0)=P^-(g_-^{-1}r_0)+g_-^{-1}P^-(gf_+).
\end{eqnarray*}
The left hand side of the last equality belongs to $\mathcal H_1^+$, while the right hand side belongs to $\mathcal H_1^-$, so both are equal to zero and we conclude that
\beq
\label{5.9}
g_+f_+=P^+(g_-^{-1}r_0)\in H_{p'}^+\,\,,\,\,g_-^{-1}P^-(gf_+)=-P^-(g_-^{-1}r_0)\in H_{p'}^-.
\eeq
Taking these relations into account, we get 
\begin{eqnarray*}
\nonumber
g_+^{-1}P^+(g_-^{-1}r_0)&=&g_+^{-1}P^+(g_-^{-1}T_gf_+)=g_+^{-1}P^+(g_-^{-1}P^+(gf_+))\\
&=&g_+^{-1}P^+\left [g_-^{-1}\left (gf_+\,-P^-(gf_+)\right )\right)]\\
&=&g_+^{-1}P^+[\,\underbrace{g_+f_+}_{\in H_{p'}^+}\,-\underbrace{g_-^{-1}P^-(gf_+)}_{\in H_{p'}^-}]=f_+=(T_g)^{-1}r_0
\end{eqnarray*}
and it follows that $g_+^{-1}P^+g_-^{-1} I_+:\mathcal R_0^+\to H_p^+$ is bounded.
\endpf

\begin{rem}
\label{rem 5.8A}
Remark that \eqref{5.5C} was not needed to prove the injectivity of $T_g$ and the relation $\mathcal R_0^+\subset Im\,T_g$ in the first part of the proof above.
\end{rem}

\begin{thm}
\label {5.9}
Let $g\in L_\infty$. The operator $T_g$ is Fredholm in $H_p^+$ if and only if $g$ admits a WH $p$-factorisation $g=g_-r^kg_+$ and, in that case, its Fredholm index is $\mbox{Ind}\,T_g=-k$.
\end{thm}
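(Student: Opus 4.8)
The plan is to reduce the Fredholm question entirely to the invertibility question already settled in Theorem~\ref{thm 5.8}, by factoring out the power $r^{k}$ and exploiting both the explicit index of $T_{r^{k}}$ supplied by Proposition~\ref{Prop 4.A} and the multiplicativity of the Fredholm index under composition, $\mbox{Ind}(AB)=\mbox{Ind}\,A+\mbox{Ind}\,B$.

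For the implication from a factorisation to Fredholmness, suppose $g=g_-r^kg_+$ is a WH $p$-factorisation. I would first note that $gr^{-k}=g_-g_+$ is then automatically a \emph{canonical} WH $p$-factorisation: the membership conditions \eqref{5.5A}--\eqref{5.5B} are inherited unchanged by the same factors $g_\pm$, while condition \eqref{5.5C} refers to the operator $g_+^{-1}P^+g_-^{-1}I_+$, which does not involve $r^k$ at all. Hence Theorem~\ref{thm 5.8} makes $T_{gr^{-k}}$ invertible. Writing $g=(gr^{-k})\,r^k$ and invoking \eqref{4.2} when $k\geq0$ and \eqref{4.3} when $k\leq0$, one gets $T_g=T_{gr^{-k}}T_{r^k}$ or $T_g=T_{r^k}T_{gr^{-k}}$ respectively. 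Either way $T_g$ is a composition of an invertible operator with the Fredholm operator $T_{r^k}$, so $T_g$ is Fredholm and $\mbox{Ind}\,T_g=\mbox{Ind}\,T_{r^k}=-k$ by Proposition~\ref{Prop 4.A} and additivity of the index.

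For the converse, assume $T_g$ is Fredholm, put $n=\mbox{Ind}\,T_g$ and $k=-n$, and consider $T_{gr^{n}}$. By \eqref{4.2}--\eqref{4.3} one has $T_{gr^{n}}=T_gT_{r^{n}}$ if $n\geq0$ and $T_{gr^{n}}=T_{r^{n}}T_g$ if $n<0$; since $T_{r^{n}}$ is Fredholm of index $-n$ by Proposition~\ref{Prop 4.A}, in both cases $T_{gr^{n}}$ is a product of two Fredholm operators, hence Fredholm, with index $\mbox{Ind}\,T_g+\mbox{Ind}\,T_{r^{n}}=n+(-n)=0$. As an operator is invertible precisely when it is Fredholm of index zero (Corollary~\ref{Cor 4.D}), $T_{gr^{n}}$ is invertible, and Theorem~\ref{thm 5.8} then provides a canonical WH $p$-factorisation $gr^{n}=g_-g_+$. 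Multiplying by $r^{-n}=r^{k}$ yields $g=g_-r^{k}g_+$, a WH $p$-factorisation of $g$ of index $k=-\mbox{Ind}\,T_g$, in agreement with the formula established above.

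The only ingredient beyond the bookkeeping with \eqref{4.2}, \eqref{4.3}, Proposition~\ref{Prop 4.A} and Theorem~\ref{thm 5.8} is the additivity of the Fredholm index under composition, which I would cite as a standard fact of Fredholm theory; the one point deserving care is the verification that a canonical factorisation of $gr^{-k}$ comes \emph{for free} from one of $g$, which rests precisely on the insensitivity of conditions \eqref{5.5A}--\eqref{5.5C} to the scalar power $r^{k}$. All the genuinely hard analysis is already encapsulated in Theorem~\ref{thm 5.8}, so I expect no further obstacle.
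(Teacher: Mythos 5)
Your proof is correct and follows essentially the same route as the paper: both directions reduce to Theorem~\ref{thm 5.8} by multiplying the symbol by the appropriate power of $r$, using \eqref{4.2}--\eqref{4.3} to factor the operator, Proposition~\ref{Prop 4.A} together with additivity of the Fredholm index, and Corollary~\ref{Cor 4.D} to pass from index zero to invertibility. The observation that conditions \eqref{5.5A}--\eqref{5.5C} are insensitive to the factor $r^k$ is also exactly how the paper handles the transition between general and canonical factorisations.
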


\beginpf
(i) Assume that $T_g$ is Fredholm and let $-k$ be its Fredholm index. Then if $k\leq 0$ we have $r^{-k}\in H_\infty^+$ and $\mbox{Ind}\,T_{r^{-k}}=k$. Therefore $T_gT_{r^{-k}}=T_{gr^{-k}}$ is a Toeplitz operator which is Fredholm with index zero. By Corollary \ref{Cor 4.D}, $T_{gr^{-k}}$ is invertible. Therefore $gr^{-k}$ admits a canonical WH $p$-factorisation $gr^{-k}=g_-g_+$ and it follows that $g=g_-r^kg_+$ is a WH $p$-factorisation. If $k\geq 0$ we have $r^{-k}\in H_\infty^-$ and $\mbox{Ind}\,T_{r^{-k}}=k$, so $T_{r^{-k}}T_g=T_{gr^{-k}}$ is a Toeplitz operator which is Fredholm with index zero, therefore invertible, and we conclude analogously that $g$ admits a canonical WH $p$-factorisation.

(ii) Conversely, let us now assume that $g=g_-r^kg_+$ is a WH $p$-factorisation. If $k\geq 0$ then $T_g=T_{g_-g_+}T_{r^k}$; if $k\leq 0$ then $T_g=T_{r^k}T_{g_-g_+}$. Since $T_{g_-g_+}$ is invertible by Theorem \ref{thm 5.8} and $T_{r^k}$ is Fredholm with index $-k$, we conclude that $T_g$ is Fredholm with index $-k$.
\endpf

\begin{cor}
\label {Cor 5.10}
If $g\in L_\infty$ admits a WH $p$-factorisation $g=g_-r^kg_+$, then the invertibility of $T_g$ corresponds to the integer $k$. For $k>0$ the operator $g_+^{-1}P^+g_-^{-1}T_{r^{-k}}$ is a left inverse for $T_g$ and $\dim\ker T_g^*=k$; for $k<0$ the operator  $T_{r^{-k}}(g_+^{-1}P^+g_-^{-1}I_+)$  is a right inverse for $T_g$ and $\dim\ker T_g=|k|$.                    
\end{cor}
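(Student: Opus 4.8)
The plan is to reduce everything to the canonical case already settled in Theorem~\ref{thm 5.8} by absorbing the factor $r^k$ into a Toeplitz operator with rational symbol, whose one-sided invertibility is completely described by Proposition~\ref{Prop 4.A}. The starting observation is that cancelling $r^k$ leaves a \emph{canonical} WH $p$-factorisation with the same factors: $gr^{-k}=g_-g_+$, and conditions \eqref{5.5A}–\eqref{5.5C} for this factorisation are literally those already assumed for $g_\pm$. Hence, by Theorem~\ref{thm 5.8}, $T_{g_-g_+}$ is invertible with $(T_{g_-g_+})^{-1}=g_+^{-1}P^+g_-^{-1}I_+$ (the bounded extension constructed there). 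Moreover, by Theorem~\ref{5.9}, $T_g$ is Fredholm with $\mathrm{Ind}\,T_g=-k$; I will feed this index into the kernel count once one-sidedness is in hand.

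For $k>0$ I would use \eqref{4.3} with exponent $-k\le 0$ to write $T_{r^{-k}}T_g=T_{gr^{-k}}=T_{g_-g_+}$. Left-multiplying by the inverse of $T_{g_-g_+}$ gives $(g_+^{-1}P^+g_-^{-1}I_+)\,T_{r^{-k}}\,T_g=I_+$; since $T_{r^{-k}}$ already maps into $H_p^+$, this left inverse is precisely $g_+^{-1}P^+g_-^{-1}T_{r^{-k}}$, as asserted. Left invertibility forces $\ker T_g=\{0\}$, and then $\mathrm{Ind}\,T_g=-k$ yields $\dim\ker T_g^*=k$; in particular the cokernel is nontrivial, so $T_g$ is not right invertible, i.e.\ it is \emph{only} left invertible.

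For $k<0$ the symmetric argument uses \eqref{4.2} with exponent $-k\ge 0$ to write $T_g\,T_{r^{-k}}=T_{gr^{-k}}=T_{g_-g_+}$; right-multiplying by $(T_{g_-g_+})^{-1}$ produces the right inverse $T_{r^{-k}}(g_+^{-1}P^+g_-^{-1}I_+)$. Right invertibility gives $\dim\ker T_g^*=0$, and $\mathrm{Ind}\,T_g=-k=|k|$ then forces $\dim\ker T_g=|k|$, so the kernel is nontrivial and $T_g$ is only right invertible. The case $k=0$ is exactly Theorem~\ref{thm 5.8}. Collecting the three cases is precisely the statement that the invertibility of $T_g$ corresponds to the integer $k$ in the sense of Definition~\ref{defn 5.3}.

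Since Theorems~\ref{thm 5.8} and~\ref{5.9} do all the analytic work, the remaining steps are bookkeeping and there is no deep obstacle. The one point requiring care is the order of composition: one must split off $r^{-k}$ on the correct side, governed by whether $r^{-k}$ lies in $H_\infty^+$ or $H_\infty^-$ (equivalently, by the sign of $-k$ selecting \eqref{4.2} versus \eqref{4.3}), so that cancelling $T_{g_-g_+}$ leaves the one-sided inverse in exactly the stated form, with $T_{r^{-k}}$ applied first for the left inverse when $k>0$ and applied last for the right inverse when $k<0$, rather than on the wrong side.
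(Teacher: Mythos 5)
Your argument is correct and follows essentially the route the paper intends for this corollary (which it states without explicit proof): reduce to the canonical case via the identities \eqref{4.2}--\eqref{4.3}, invoke Theorem~\ref{thm 5.8} for the invertibility of $T_{g_-g_+}$ and the explicit form of its inverse, and read off the kernel dimensions from the index $-k$ supplied by Theorem~\ref{5.9}. Your care about which side $r^{-k}$ is split off, writing $T_{r^{-k}}T_g=T_{g_-g_+}$ for $k>0$ and $T_gT_{r^{-k}}=T_{g_-g_+}$ for $k<0$, is exactly what produces the one-sided inverses in the precise form stated.
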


It is clear from Definition \ref{defn 5.5} that $g\in L_\infty$ admits a WH $p$-factorisation $g=g_-r^kg_+$ if and only if $g_0=r^{-k}g$ admits a canonical WH $p$-factorisation. In that case $T_{g_0}$ is invertible by Theorem \ref{thm 5.8}. Therefore the factors $g_\pm$ satisfy \eqref{5.5A}, \eqref{5.5B} and the range of $T_{g_0}=T_{g_-g_+}$ is closed in $H_p^+$, i.e.,
\beq
\label{5.5D}
T_{r^{-k}g}=T_{g_-g_+} \,\,\mbox{has closed range}.
\eeq
We can see from the proof of Theorem \ref{thm 5.8} that the converse is also true and thus condition \eqref{5.5C} in Definition \ref{5.5} can be replaced by \eqref{5.5D}. In fact, from the first part of the proof of Theorem \ref{thm 5.8}  we see that \eqref{5.5A}
 and \eqref{5.5B} imply that $T_{g_0}$ is injective and $\mathcal R_0^+\subset \mbox{Im}\,T_{g_0}$. Since $\mathcal R_0^+$ is dense in $H_p^+$, by adding \eqref{5.5D} to  \eqref{5.5A} and \eqref{5.5B} we conclude that $T_{g_0}$ is surjective, therefore invertible. Thus we have the following.

 \begin{cor}
\label {Cor 5.11}
The factorisation $g=g_-r^kg_+$, with $k\in \mathbb Z$, is a WH $p$-factorisation for $g\in L_\infty$ if and only if $g_\pm$ satisfy \eqref{5.5A}, \eqref{5.5B} and \eqref{5.5D}.                  
\end{cor}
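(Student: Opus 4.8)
The plan is to strip the problem down to the canonical case and then read off everything from Theorem \ref{thm 5.8} and Remark \ref{rem 5.8A}. First I would set $g_0=r^{-k}g$, so that $g_0=g_-g_+$, and observe that conditions \eqref{5.5A}, \eqref{5.5B}, \eqref{5.5C} involve only the factors $g_\pm$ and are literally insensitive to the extra factor $r^k$, while \eqref{5.5D} is by definition a statement about $T_{g_0}=T_{r^{-k}g}$. Consequently $g=g_-r^kg_+$ is a WH $p$-factorisation if and only if $g_0=g_-g_+$ is a canonical one, and since \eqref{5.5A} and \eqref{5.5B} occur in both lists, the entire assertion reduces to showing that, assuming \eqref{5.5A} and \eqref{5.5B}, condition \eqref{5.5C} is equivalent to \eqref{5.5D}. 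By Theorem \ref{thm 5.8}, $g_0=g_-g_+$ is a canonical WH $p$-factorisation exactly when $T_{g_0}$ is invertible, so it is enough to prove that, under \eqref{5.5A} and \eqref{5.5B}, invertibility of $T_{g_0}$ is equivalent to closedness of its range.

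The forward implication is immediate: if $g=g_-r^kg_+$ is a WH $p$-factorisation, then $g_0=g_-g_+$ is canonical, so $T_{g_0}$ is invertible by Theorem \ref{thm 5.8}, and an invertible operator has closed (indeed full) range; thus \eqref{5.5D} holds, while \eqref{5.5A} and \eqref{5.5B} hold by Definition \ref{defn 5.5}.

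The substance is the converse, where I assume \eqref{5.5A}, \eqref{5.5B} and \eqref{5.5D} and must recover \eqref{5.5C}. The decisive observation, already isolated in Remark \ref{rem 5.8A}, is that the first part of the proof of Theorem \ref{thm 5.8} uses only \eqref{5.5A} and \eqref{5.5B} to establish both injectivity of $T_{g_0}$ and the inclusion $\mathcal R_0^+\subset\mbox{Im}\,T_{g_0}$; I would invoke this verbatim to obtain these two facts at no extra cost. Now \eqref{5.5D} asserts that $\mbox{Im}\,T_{g_0}$ is closed, and since $\mathcal R_0^+$ is dense in $H_p^+$, a closed subspace containing $\mathcal R_0^+$ must equal $H_p^+$; hence $T_{g_0}$ is surjective. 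Being a bounded bijection between Banach spaces, $T_{g_0}$ is then invertible by the open mapping theorem, so Theorem \ref{thm 5.8} gives that $g_0=g_-g_+$ is a canonical WH $p$-factorisation, i.e. \eqref{5.5C} holds, and therefore $g=g_-r^kg_+$ is a WH $p$-factorisation.

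The main obstacle is conceptual rather than computational: one must recognise that neither injectivity of $T_{g_0}$ nor the inclusion $\mathcal R_0^+\subset\mbox{Im}\,T_{g_0}$ depends on \eqref{5.5C} — precisely the content of Remark \ref{rem 5.8A} — so that in Definition \ref{defn 5.5} the boundedness condition \eqref{5.5C} serves only to upgrade a dense range to a closed, hence full, range. Once this is seen, the density of $\mathcal R_0^+$ in $H_p^+$ converts the closed-range hypothesis \eqref{5.5D} directly into surjectivity, and the remaining implications are routine.
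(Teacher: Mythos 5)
Your proof is correct and follows essentially the same route as the paper: reduce to $g_0=r^{-k}g$, use Remark~\ref{rem 5.8A} to obtain injectivity of $T_{g_0}$ and the inclusion $\mathcal R_0^+\subset \mbox{Im}\,T_{g_0}$ from \eqref{5.5A}--\eqref{5.5B} alone, and then combine the density of $\mathcal R_0^+$ with the closed-range hypothesis \eqref{5.5D} to get invertibility of $T_{g_0}$. The one point worth tightening is the final appeal to Theorem~\ref{thm 5.8}, which only yields \emph{some} canonical factorisation of $g_0$; to conclude that the \emph{given} factors $g_\pm$ satisfy \eqref{5.5C} you should either invoke the uniqueness statement (Theorem~\ref{Thm 5.6}, whose proof uses only \eqref{5.5A}--\eqref{5.5B}) or observe that the computation in part (ii) of the proof of Theorem~\ref{thm 5.8} shows directly that $g_+^{-1}P^+g_-^{-1}I_+$ coincides with $(T_{g_0})^{-1}$ on $\mathcal R_0^+$, hence is bounded.
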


This means that we can replace \eqref{5.5C} by another condition which is easy to verify for a large class of functions in $L_\infty$, including all piecewise continuous functions (\cite{Du, GK}), as we show in section 6.

\vspace{5mm}
We can now ask when does a function $g\in L_\infty$ admit a WH $p$-factorisation and, if it exists, how to obtain its factors and, in particular, its index.We address this question in the following section.


\section{Piecewise continuous symbols}

There are several classes of functions for which simple criteria for existence of a WH $p$-factorization can be established. (see for instance \cite{BSilb, CG, MP}). We will consider here only the class $PC(\mathbb R_\infty)$ of all piecewise continuous functions with finite limits at $\pm \infty$.

\vspace{5mm}
To each $g\in PC(\mathbb R_\infty)$ we associate 
\beq
\label{6.1}
g_p(x,w) =\frac{g(x^-)+g(x^+)}{2}+\frac{g(x^-)-g(x^+)}{2}\, coth\,\left ((\pi(\frac{i}{p}+w)\right)
\eeq
for $x\in \mathbb R_\infty\,,\,w\in \mathbb R_\pm\infty$, where $\mathbb R_\infty$ and $\mathbb R_\pm\infty$ are the one-point and the two-point compactifications of the real line, respectively, and for $x=\infty$ we  take $g(\infty^-)=g(+\infty)\,,\,g(\infty^+)=g(-\infty)$ (\cite{Du}).

If $g\in C(\mathbb R_\infty)$ then $g_p(x,w)=g(x)$ for all $x\in \mathbb R_\infty$ and $w\in \mathbb R_{\pm\infty}$. It is also easy to see that if $p=2$ then 
\beq
\label{6.2}
g_2\,(x,w) =\frac{g(x^+)+g(x^-)}{2}+\frac{g(x^+)-g(x^-)}{2}\,th(\pi w).
\eeq
The image of $g_p$ in the complex plane is a closed curve $\Gamma$ obtained by adding to the image of $g(x)$, with $x\in \mathbb R_\infty$ such that $g$ is continuous at $x$, certain arcs of a circle (or line segments, when $p=2$) connecting the points corresponding to $g(x^-)$ and $g(x^+)$ whenever these two values are different. If 
\beq
\label{6.3}
\inf\limits_{(x,w)\in \,\mathbb R_\infty\times\,\mathbb R_\pm\infty}|g_p(x,w)|>0
\eeq
then we can associate with $g_p$ an integer, designated by $ind\,g_p$, defined as the winding number of $\Gamma$ around the origin. If $g\in C(\mathbb R_\infty)$ then condition \eqref{6.3} means that $g\in \mathcal G C(\mathbb R_\infty)$ and in that case $ind\,g_p=ind\,g$.

\begin{defn}
\label {6.1}
We say that $g\in PC(\mathbb R_\infty)$ is $p$-nonsingular, or $p$-regular, if and only if \eqref{6.3} holds.
\end{defn}

Note that the product of two $p$-regular functions may be $p$-singular, as shown in an example presented below in this section. However, if two functions $g\,,\,h\in PC(\mathbb R_\infty)$ have no common points of discontinuity, then $(gh)_p(x,w)=g_p(x,w)\,h_p(x,w)$ and, if $gh$ is $p$-regular, then $ind\,(gh)_p=ind\,g_p\,ind\,h_p$ (\cite{Du}).

\vspace{5mm}
For Toeplitz operators with piecewise continuous symbols we have the following.

\begin{thm}
\label {6.2}
(\cite{Du, W}) Let $g\in PC(\mathbb R_\infty)$. The operator $T_g$ has closed range in $H_p^+$ if and only if $g$ is $p$-regular. In this case $T_g$ is Fredholm, the invertibility of $T_g$ corresponds to $k=ind\,g_p$ and the Fredholm index of $T_g$ is $\mbox{Ind}\,T_g=-k$.
\end{thm}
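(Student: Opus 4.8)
The plan is to reduce everything to the machinery of Section 5: by Theorem \ref{5.9} it suffices to show that $p$-regularity of $g$ is equivalent to $T_g$ admitting a WH $p$-factorisation $g=g_-r^kg_+$, and to identify $k=\mbox{ind}\,g_p$; then closed range is part of Fredholmness, the reverse implication (closed range $\Rightarrow g\in\mathcal{G}L_\infty$) comes from Corollary \ref{cor3.5}, and Corollary \ref{Cor 5.10} supplies the invertibility and kernel statements. So the real content is, for a $p$-regular $g$, the explicit construction of such a factorisation. First I would list the finitely many discontinuities $x_1,\dots,x_n\in\RR_\infty$ of $g$ and split off one model factor per jump, writing $g=\big(\prod_{j}\eta_j\big)\,g_c$ with $g_c\in C(\RR_\infty)$ and each $\eta_j$ carrying exactly the jump $g(x_j^+)/g(x_j^-)$ and continuous away from $x_j$. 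Because the $\eta_j$ and $g_c$ have pairwise disjoint discontinuity sets, the associated curves multiply, so $p$-regularity of $g$ passes to each factor and the winding numbers add.

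For a jump at a finite point $x_j$, the model factor I would use is the power function
\[
\eta_j(x)=\left(\frac{x-x_j-i}{x-x_j+i}\right)^{\gamma_j},\qquad \gamma_j=\frac{1}{2\pi i}\,\log\frac{g(x_j^+)}{g(x_j^-)},
\]
where the branch of the logarithm (equivalently, the integer added to $\gamma_j$) is chosen below. The base $\tfrac{x-x_j-i}{x-x_j+i}$ has modulus $1$ on $\RR$ and winds once around the unit circle, crossing the negative real axis only at $x=x_j$; hence, cutting $z^{\gamma_j}$ along that axis, $\eta_j$ is continuous on $\RR_\infty$ except for a jump at $x_j$ of the prescribed size $e^{2\pi i\gamma_j}=g(x_j^+)/g(x_j^-)$. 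The decisive point is that $\eta_j$ factors at once as $\eta_j=(x-x_j-i)^{\gamma_j}\,(x-x_j+i)^{-\gamma_j}$, the first factor analytic and non-vanishing in $\CC^-$ and the second in $\CC^+$. Whether these lie in the spaces required by \eqref{5.5A}--\eqref{5.5B} is governed by the local integrability of $|x-x_j|^{\pm\mathrm{Re}\,\gamma_j}$ near $x_j$ and at $\infty$; this forces $\mathrm{Re}\,\gamma_j$ into a half-open window of length one determined by $p$, and the branch is chosen to land there. A short computation shows this is possible precisely when $\mathrm{Re}\,\gamma_j\not\equiv 1/p\pmod 1$, i.e. when the connecting arc in \eqref{6.1}, traced by $\coth(\pi(\tfrac{i}{p}+w))$ between $g(x_j^-)$ and $g(x_j^+)$, misses the origin --- which is exactly the $p$-regularity condition \eqref{6.3} at $x_j$. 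A jump at $\infty$ is handled by the same device after a M\"obius change of variable.

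For the continuous factor $g_c\in\mathcal{G}C(\RR_\infty)$ I would invoke the fact recalled after Corollary \ref{cor 5.4} that continuous non-vanishing symbols admit bounded WH factorisations, with index equal to the winding number. Multiplying the factorisations of $g_c$ and of the $\eta_j$ together --- legitimate since the products of the relevant factors stay in the required Hardy-type spaces by the multiplicative properties \eqref{2.6} and \eqref{2.13} --- yields $g=g_-r^kg_+$ with $g_\pm^{\pm1}$ in the spaces of \eqref{5.5A}--\eqref{5.5B}. To confirm that this is genuinely a WH $p$-factorisation I would verify condition \eqref{5.5D} (closed range of $T_{g_-g_+}$) and appeal to Corollary \ref{Cor 5.11}, rather than estimating the operator bound \eqref{5.5C} by hand. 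By additivity of winding numbers the resulting exponent is $k=\mbox{ind}\,g_p$. Theorem \ref{5.9} then gives $T_g$ Fredholm with $\mbox{Ind}\,T_g=-k$, and Corollary \ref{Cor 5.10} gives the one-sided inverses and the kernel dimensions, so that the invertibility of $T_g$ corresponds to $k=\mbox{ind}\,g_p$.

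It remains to prove the converse, that closed range forces $p$-regularity. By Corollary \ref{cor3.5}, closed range already gives $g\in\mathcal{G}L_\infty$, hence every one-sided limit $g(x_j^\pm)$ is non-zero; the only thing left to exclude is that a connecting arc passes through the origin. I would argue the contrapositive by localising at an offending jump: if $\mathrm{Re}\,\gamma_j\equiv 1/p\pmod 1$, then the local model $T_{\eta_j}$ sits exactly on the integrability threshold, its factors fail \eqref{5.5A}--\eqref{5.5B} for every branch, and $T_{\eta_j}$ is not normally solvable; transporting this to $T_g$ by a singular sequence concentrated near $x_j$ (or by a local principle of Allan--Douglas / Gohberg--Krupnik type) shows $\mbox{Im}\,T_g$ is not closed. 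I expect this converse, together with the exact integrability bookkeeping for $\mathrm{Re}\,\gamma_j$ that makes $p$-regularity the sharp dividing line, to be the main obstacle: it is precisely where the value of $p$ enters and where the Fredholm machinery of Section 5 is not by itself sufficient, so I would present the model-function factorisation in full and cite \cite{Du, W} for the non-closedness at the critical exponent.
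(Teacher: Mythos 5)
First, a point of comparison: the paper does not prove this theorem at all --- it is imported wholesale from \cite{Du, W} --- so there is no internal argument to measure yours against. Your outline (split $g$ as in \eqref{6.6} into an invertible continuous part and one power-type model factor per jump, factor each model function explicitly into a $\CC^-$-analytic and a $\CC^+$-analytic power, and match the admissible branch window for $\Re\,\gamma_j$ against the $p$-regularity condition \eqref{6.3}) is indeed the standard route of those references and is consistent with everything the paper does afterwards (the representation \eqref{6.6}, Theorem \ref{thm 6.4}, the example \eqref{6.7}).

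The genuine gap is at the decisive step, and as written it is circular. You propose to certify that your product is a WH $p$-factorisation by ``verifying condition \eqref{5.5D} and appealing to Corollary \ref{Cor 5.11} rather than estimating \eqref{5.5C} by hand'' --- but \eqref{5.5D} is closed range of $T_{r^{-k}g}$, which is (up to the finite-index perturbation $T_{r^{\pm k}}$) exactly the closed-range assertion the forward implication of the theorem is making; indeed the paper deduces \eqref{5.5D} for the symbol in \eqref{6.7} \emph{from} this theorem, not the other way around. To close the argument you must actually prove, for your power factors, either the bound \eqref{5.5C} or equivalently \eqref{5.5D}; concretely this is the weighted-norm inequality for $S_\RR$ on $L_p$ with the power weight $\prod_j|x-x_j|^{\Re\gamma_j}$ (Hardy--Littlewood/Babenko, or the $A_p$ condition for power weights), and it is precisely here that the strict inequalities $1/p-1<\Re\,\alpha_j<1/p$ --- hence $p$, hence $p$-regularity --- enter. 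This analytic core cannot be delegated to Corollary \ref{Cor 5.11}. Two smaller points: the claim that the product of the individual plus-factors remains in $\mathcal H_{p'}^+$ does not follow from \eqref{2.6} or \eqref{2.13}, since the power factors are unbounded and \eqref{2.5} degrades the exponent --- you need the disjointness of the points $x_j$ to control the product locally; and the converse (closed range $\Rightarrow$ $p$-regularity) is left entirely to a localisation argument cited from \cite{Du, W}, which is defensible given that the paper cites the whole theorem, but should be declared as a citation rather than presented as a proof step.
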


\begin{cor}
\label {Cor 6.3}
If $g\in PC(\mathbb R_\infty)$, then $g$ admits a WH $p$-factorisation if and only if $g$ is $p$-regular and, in this case, $g=g_-r^kg_+$ with $k=ind\,g_p$.
\end{cor}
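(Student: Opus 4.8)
The plan is to obtain this corollary by combining Theorem~\ref{6.2} with the general characterisation of Fredholmness for Toeplitz operators in Theorem~\ref{5.9}. The conceptual bridge is that, for a piecewise continuous symbol, the statements ``$T_g$ has closed range'' and ``$T_g$ is Fredholm'' are equivalent: a Fredholm operator always has closed range, and Theorem~\ref{6.2} asserts conversely that whenever $T_g$ has closed range (equivalently, whenever $g$ is $p$-regular) the operator $T_g$ is automatically Fredholm. Thus for $g\in PC(\mathbb{R}_\infty)$ the three properties ``$g$ is $p$-regular'', ``$T_g$ has closed range'' and ``$T_g$ is Fredholm'' all coincide.

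First I would establish the equivalence asserted in the corollary. If $g$ is $p$-regular, then by Theorem~\ref{6.2} the operator $T_g$ is Fredholm, and hence by Theorem~\ref{5.9} the symbol $g$ admits a WH $p$-factorisation $g=g_-r^kg_+$. Conversely, if $g$ admits such a factorisation, then Theorem~\ref{5.9} guarantees that $T_g$ is Fredholm; being Fredholm it has closed range, so Theorem~\ref{6.2} forces $g$ to be $p$-regular. This proves the stated equivalence.

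Finally I would pin down the index. Assuming $g$ is $p$-regular with factorisation $g=g_-r^kg_+$, Theorem~\ref{6.2} gives $\mbox{Ind}\,T_g=-ind\,g_p$ while Theorem~\ref{5.9} gives $\mbox{Ind}\,T_g=-k$; equating the two yields $k=ind\,g_p$, and uniqueness of the factorisation index (Theorem~\ref{Thm 5.6}) ensures that this value is unambiguous. I expect no genuine obstacle at any step: all the analytic substance resides in Theorems~\ref{6.2} and~\ref{5.9}, which have already been established, so the present corollary merely records their combination.
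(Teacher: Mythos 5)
Your argument is correct and is precisely the intended derivation: the paper states Corollary~\ref{Cor 6.3} without proof as an immediate consequence of combining Theorem~\ref{6.2} (closed range $\Leftrightarrow$ $p$-regular, with Fredholmness and index $-ind\,g_p$ in that case) with Theorem~\ref{5.9} (Fredholm $\Leftrightarrow$ existence of a WH $p$-factorisation, with index $-k$), and your identification of $k=ind\,g_p$ by equating the two index formulae is exactly the right closing step.
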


In the case $p=2$ we see from the previous corollary that $g\in PC(\mathbb R_\infty)$ is $2$-regular if and only if $g$ is locally sectorial, i.e., for all $x\in \mathbb R_\infty$ we have 
\beq
\label{6.4}
g(x^\pm)\neq 0
\eeq
\beq
\label{6.5}
\frac{g(x^+)}{g(x^-)}= e^{2\pi i \alpha}\,\,\,\,\mbox{with}\,\,\,|\Re (\alpha)|<1/2.
\eeq
Condition \eqref{6.5} means precisely that the discontinuities are such that the line segment connecting the points $g(x^-)$ and $g(x^+)$ in the complex plane does not include the origin.

\vspace{5mm}
Now, having considered the Fredholm properties of $T_{r^k}$ with $k\in\mathbb Z$, in Proposition \ref{4A}, it is natural to consider also the case when the exponent $k$ is not an integer, to illustrate the previous results.

Let $p=2\,,\,\alpha \in \mathbb R\setminus\mathbb Z\,\,\,\mbox{with}\,\,\,|\alpha|\leq 1/2$, and let $g_{(\alpha,\infty)}(x)=\left(\frac{x-i}{x+i}\right)^\alpha_\infty = e^{\alpha\,log\frac{x-i}{x+i}}$ where $log\,w=log\,|w|+i\,arg(w)$ with $arg(w)\in[0\,,\,2\pi[$. In this case $g_{(\alpha,\infty)}(x)$ is analytic in the whole complex plane except for the branch cuts $\pm i\,[0,\infty[$ in the imaginary axis; thus $\left(\frac{x-i}{x+i}\right)^\alpha_\infty$ is continuous on $\mathbb R$, but it has a discontinuity at $\infty$: since $\lim_{x\to +\infty}\frac{x-i}{x+i}=2\pi\,,\,\lim_{x\to -\infty}\frac{x-i}{x+i}=0$, we have
\[
\left(\frac{x-i}{x+i}\right)^\alpha_\infty\,(-\infty)=e^{i2\pi \alpha}\,\,,\,\,\,\left(\frac{x-i}{x+i}\right)^\alpha_\infty\,(+\infty)=1
\]
The image of $(g_{(\alpha,\infty)})_2(x,w)$ in the complex plane consists of the upper half of the unit circle and a line segment connecting the points $1$ and $-1$, when $\alpha=1/2$; it consists of an arc of the unit circle in the upper half-plane connecting the points $e^{2i\pi\alpha}$ and $1$, and a line segment from $1$ to $e^{2i\pi\alpha}$, when $0<\alpha< 1/2$.
It follows from Corollary \ref{Cor 6.3} that $g_{(\alpha,\infty)}$ admits a WH $2$-factorisation  if and only if $|\alpha|<1/2$ and, in that case, the factorisation is canonical.

\vspace{5mm}
We see in particular that $g_{(1/4,\infty)}$ is $2$-regular but $g_{(1/4,\infty)}^2=g_{(1/2,\infty)}$ is $2$-singular, which illustrates our previous assertion that the product of two $2$-regular functions may be $2$-singular if they have common discontinuities.

\vspace{5mm}
More generally, defining for $c\in \mathbb R\,,\,\alpha\in \mathbb C\setminus\mathbb Z$,
\[
(z)^\alpha_c=e^{\alpha\,log\,z}
\]
where the branch cut connecting $0$ to $\infty$ intersects the unit circle at the point $z_0=\frac{c-i}{c+i}$, the function $\left(\frac{x-i}{x+i}\right)^\alpha_c$ is continuous for all points of $\mathbb R_\infty$ except the point $c$. If the discontinuity points of $g\in PC(\mathbb R_\infty)$ are $c_1, c_2,...c_n \in \mathbb R$ and (eventually) $\infty$, then $g$ can be represented in the form (\cite{Du})
\beq
\label{6.6}
g(x)=g_0(x) \left(\frac{x-i}{x+i}\right)^{\alpha_0}_{\infty} \,\,\prod_{j=1}^n \left(\frac{x-i}{x+i}\right)^{\alpha_j}_{c_j}
\eeq
where $g_0\in C(\mathbb R_\infty)$ and
\[
\alpha_j=\frac{1}{2\pi i}\,log\,\frac{g(c_j^-)}{g(c_j^+)}\,\,,\,\,\mbox{for}\,\,j=1, 2, ..., n\,,\,\,\,\,\,\alpha_0=\frac{1}{2\pi i}\,log\,\frac{g(+\infty)}{g(-\infty)}.
\]

Using Corollary \ref{6.3} we have the following:

\begin{thm}
\label {thm 6.4}
(\cite{Du, W}) Let $g\in PC(\mathbb R_\infty)$ be given by \eqref{6.6}. The operator $T_g$ is Fredholm if and only if $-1/p<\Re (\alpha_0)<1-1/p\,,\,1/p-1<\Re (\alpha_j)<1/p$ for all $j=1, 2, ..., n$; in that case its Fredholm index is $k=-ind\,g_0$.
\end{thm}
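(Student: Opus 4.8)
The plan is to apply Corollary \ref{Cor 6.3}, which reduces Fredholmness of $T_g$ for $g\in PC(\mathbb R_\infty)$ to $p$-regularity of $g$, i.e.\ to condition \eqref{6.3}, and which moreover identifies the index as $k=ind\,g_p$. So everything comes down to (a) translating $p$-regularity of $g$ into the stated inequalities on $\Re(\alpha_0),\Re(\alpha_j)$, and (b) computing $ind\,g_p$. Since $g$ is written in the product form \eqref{6.6}, the natural strategy is to analyse one elementary factor $\left(\frac{x-i}{x+i}\right)^\alpha_c$ at a time and then reassemble, because the factors in \eqref{6.6} have pairwise distinct discontinuity points and so, by the multiplicativity of $(\cdot)_p$ recorded in the remark after Definition \ref{6.1}, we have $g_p=(g_0)_p\prod_j(\cdot)_p$ pointwise and $ind\,g_p$ splits as the sum of the individual winding numbers.

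First I would treat the continuous factor $g_0\in C(\mathbb R_\infty)$: by the observation preceding Definition \ref{6.1}, $(g_0)_p(x,w)=g_0(x)$, so $g_0$ is automatically $p$-regular (assuming $g_0\in\mathcal GC(\mathbb R_\infty)$, which holds since $g\in\mathcal GL_\infty$ is forced once $T_g$ is semi-Fredholm, by Corollary \ref{cor3.5}) and contributes $ind\,g_0$ to the index. Next I would analyse the single elementary factor $\left(\frac{x-i}{x+i}\right)^\alpha_c$, whose only discontinuity is at $x=c$ (or at $\infty$ for the $\alpha_0$-factor). At such a jump the local behaviour of $g_p$ is governed by formula \eqref{6.1}: the relevant quantity is the arc swept out as $w$ runs over $\mathbb R_{\pm\infty}$, determined by $\coth\left(\pi\left(\tfrac{i}{p}+w\right)\right)$ together with the jump ratio $g(c^+)/g(c^-)=e^{-2\pi i\alpha}$. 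The condition \eqref{6.3} at this point fails exactly when this arc passes through the origin, and a direct computation of $g_p(c,w)$ shows that this happens precisely when $\Re(\alpha_j)$ hits one of the threshold values $1/p$ or $1/p-1$ (respectively $-1/p$ or $1-1/p$ for the point at infinity, where the roles of $\pm\infty$ are interchanged as noted after \eqref{6.1}). This yields the stated open inequalities as the $p$-regularity condition.

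The main obstacle I expect is precisely this computation: verifying analytically that the arc parametrised by $w\mapsto g_p(c,w)$ avoids the origin iff $\Re(\alpha_j)\in\,]1/p-1,1/p[$, and correctly pinning down the boundary cases and the asymmetric interval $]-1/p,1-1/p[$ for $\alpha_0$. This requires understanding the image of the M\"obius-type map $w\mapsto \tfrac{1}{2}(1+e^{-2\pi i\alpha})+\tfrac{1}{2}(1-e^{-2\pi i\alpha})\coth\left(\pi\left(\tfrac{i}{p}+w\right)\right)$ and checking when the origin lies on it; the illustrative computation for $g_{(\alpha,\infty)}$ done earlier in the section (where the arc is an explicit circular arc meeting the unit circle) is the model to generalise. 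Once regularity is established, the index formula $k=-ind\,g_0$ follows because each elementary factor, being $p$-regular with the discontinuity arc \emph{not} enclosing the origin, contributes winding number $0$, so $ind\,g_p=ind\,(g_0)_p=ind\,g_0$, and Corollary \ref{Cor 6.3} gives $\mbox{Ind}\,T_g=-k=-ind\,g_0$.
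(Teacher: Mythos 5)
Your proposal is correct and follows essentially the same route the paper indicates: the paper offers no proof of its own beyond the phrase ``Using Corollary \ref{Cor 6.3}'' and the citations to Duduchava and Widom, and your plan is precisely to combine Theorem \ref{6.2} and Corollary \ref{Cor 6.3} with the multiplicativity of $g\mapsto g_p$ over the factors of \eqref{6.6}, which have pairwise disjoint discontinuities. The factor-by-factor analysis you sketch (the arc $w\mapsto g_p(c,w)$ meets the origin exactly when $\coth(\pi(i/p+w))=-\coth(i\pi\alpha_j)$, i.e.\ when $\Re(\alpha_j)+1/p\in\mathbb Z$, and the winding contribution of each elementary factor vanishes on the component of regular values containing $\alpha=0$) is exactly the content the paper delegates to the references, so your outline fills in, rather than deviates from, the paper's argument.
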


\vspace{5mm}
Although there are explicit formulae for the factors in a WH $p$-factorisation of $g\in PC(\mathbb R_\infty)$, once its index is known (\cite{W}), the factors can in some cases be obtained by inspection. For example, if $-1/2<\alpha<1/2$ we can write
\beq
\label{6.7}
\left(\frac{x-i}{x+i}\right)^\alpha_\infty=(x-i)^\alpha\,\left(\frac{1}{x+i}\right)^\alpha
\eeq
choosing appropriate branches such that $(x-i)^{\pm\alpha}\in \mathcal H_2^-\,,\,\left(\frac{1}{x+i}\right)^{\pm\alpha}\in \mathcal H_2^+$. Since the range of  $T_g$, with $g$ given by the left hand side of \eqref{6.7}, is closed by Theorem \ref{6.2}, it follows from Corollary \ref{Cor 5.11} that \eqref{6.7} is a canonical WH $2$-factorisation of $g$.


\section{The spectrum of the singular integral operator $S_J$ in $L_2(J)$}

Let $S_J$, where $J\subset \mathbb R$ is an interval, denote the singular integral operator
\beq
\label{7.1}
S_J:L_2(J)\to L_2(J)\,\,\,,\,\,\,S_J\,\,\phi(x)=\frac{1}{\pi i} \int_J{\frac{\phi(t)}{t-x}\,dt}\,\,,\,\,x\in J.
\eeq
In this section we study the spectrum of $S_J$ in three cases, $J=\mathbb R\,,\,J=\mathbb R^+\,,\,J=[0,1]$, using the results of the previous sections. The spectrum of $S_J$ in these three cases was described in \cite{W} using a slightly different approach. Furthermore, using the factorisation of the symbol of an associated Toeplitz operator, we obtain expressions for the resolvent operator $(S_{\mathbb R^+}-\lambda I_{\mathbb R^+})^{-1}$ when $\lambda \notin \sigma(S_{\mathbb R^+})$. Here $I_{\mathbb R^+}$ denotes the identity operator in $L_2(\mathbb R^+)$.

\vspace{5mm}
\emph{First case:} $J=\mathbb R$

Using \eqref{3.4} we can write $S_\mathbb R - \lambda I$ as a paired operator:
\beq
\nonumber
S_\mathbb R - \lambda I=P^+-P^--\lambda(P^++P^-)=(1-\lambda)P^+-(1+\lambda)P^-.
\eeq
Clearly $\lambda=\pm 1$ belong to the point specrum of $S_\mathbb R$, since $S_\mathbb R\,-\,I=-2P^-$, $S_\mathbb R+I=2P^+$ and $P^\pm$ have infinite dimensional kernels. For $\lambda=\pm 1$ the operator $S_\mathbb R - \lambda I$ is invertible, with inverse $\frac{1}{1-\lambda^2}(S_\mathbb R + \lambda I)$. Thus $\sigma(S_\mathbb R)=\{1,-1\}$.

\vspace{5mm}
\emph{Second case:} $J=\mathbb R^+$.

For $\phi\in L_2(\mathbb R^+)$ we have
\beq
\nonumber
S_{\mathbb R^+}\,\,\phi(x)=\frac{1}{\pi i} \int_{\mathbb R^+}{\frac{\phi(t)}{t-x}\,dt}\,\,,\,\,x\in \mathbb R^+.
\eeq
Let $\chi_\pm$ denote the characteristic functions of $\mathbb R^\pm$, respectively. Identifying $L_2(\mathbb R^+)$ with a subspace of $L_2(\mathbb R)$, $L_2(\mathbb R^+)=\chi_+L_2(\mathbb R)$, we have
\beq
\label{7.2}
S_{\mathbb R^+}-\lambda I_{R^+}=\chi_+\,(S_{\mathbb R}-\lambda I)_{|{\chi_+L_2}}.
\eeq
We use now the Fourier transform, a natural tool to deal with convolution integrals,
\beq
\label{7.3}
\mathcal F\,f(x)=\frac{1}{\surd\overline{2\pi}}\int_\mathbb R{f(x)\, e^{ixt}dt}=\frac{1}{\surd\overline{2\pi}}\lim_{A\to\infty}\int_{-A}^A{f(x)\, e^{ixt}dt}
\eeq
which is an isometric isomorphism from $\chi_+\,L_2$ onto $H_2^+$. We have
\beq
\label{7.4}
P^+=\mathcal F\,\chi_+\,\mathcal F^{-1}=\mathcal F^{-1}\,\chi_-\,\mathcal F\,\,\,,\,\,\,P^-=\mathcal F\,\chi_-\,\mathcal F^{-1}=\mathcal F^{-1}\,\chi_+\,\mathcal F\,.
\eeq
Thus we can reduce the study of the invertibility of $S_{\mathbb R^+}-\lambda I_{R^+}$ to the study of the invertibility of the Toeplitz operator
\[
T_{-(sign+\lambda)}:H_2^+\to H_2^+
\]
where $sign\,(x)=\chi_+(x)-\chi_-(x)$, since, for all $f_+\in H_2^+$.
\begin{eqnarray*}
\nonumber
T_{(sign+\lambda)}f_+&=&P^+(sign\,x+\lambda)f_+=\mathcal F\,\chi_+\,\mathcal F^{-1}(\chi_+-\chi_-+\lambda)\mathcal F F_+\\
&=& \mathcal F\,\chi_+\,(\mathcal F^{-1}\chi_+\mathcal F-\mathcal F^{-1}\,\chi_-\,\mathcal F\,+\lambda)F_+\\
&=&\mathcal F\chi_+(P^--P^++\lambda)\,F_+=-\mathcal F\chi_+(S_\mathbb R-\lambda I)F_+\\
&=&-\mathcal F\chi_+(S_\mathbb R-\lambda I)\mathcal F^{-1}f_+=-\mathcal F(S_{\mathbb R^+}-\lambda I_{R^+})\mathcal F^{-1}f_+\,.
\end{eqnarray*}
The symbol $g_\lambda(x)=-sign\,(x)-\lambda$ is $2$-regular if and only if $\lambda\notin [-1,1]$; in this case, $ind\,(g_\lambda)_{2}=0$ and therefore, by Theorem \ref{6.2}, the operator $T_{g_\lambda}$ is invertible. For $\lambda \in [-1,1]$, the range of $T_{g_\lambda}$ is not closed. We conclude therefore that $\sigma (S_{\mathbb R^+})=[-1,1]$.
Since $g_\lambda(x)$ is real for all $x$, $T_{g_\lambda}=T_{g_\lambda}^*$ and by Coburn's Lemma $\ker T_{g_\lambda}=\ker T_{g_\lambda}^*=\{0\}$. 

\vspace{5mm}
\emph{Third case:} $J=[0,1]$.
Let $Y:L_2([0,1])\to \chi_+L_2$ be the isometric isomorphism
\beq
\label{7.5}
Y\,\phi(t)=e^{-t/2}\phi(e^{-t})\chi_+(t).
\eeq
Identifying $L_2(\mathbb R^+)$ with $\chi_+ L_2$ as usual, we define
\beq
\nonumber
M=YJY^{-1}:L_2(\mathbb R^+)\to L_2(\mathbb R^+)\,\,,\,
\eeq
\beq
\label{7.6}
Mf(x)=\frac{1}{\pi i }\int_{\mathbb R^+}{\frac{e^{1/2(t-x)}}{1-e^{t-x}}}\,f(t) dt=:s(t)\,\,\,,\,\,\,t>0.
\eeq
$M$ is a convolution operator: $Mf=\chi_+E\ast f$ where 
\beq
\label{7.7}
E(x)=\frac{e^{-\frac{x}{2}}}{1-e^{-x}}\,\,\,,\,\,\,x\in \mathbb R.
\eeq
Applying the Fourier transform to $M$, and taking into account that (\cite{Du})
\beq
\label{7.8}
\left(\mathcal F E\right)(w)=th(\pi w)\,\,\,,\,\,\,w\in \mathbb R,
\eeq
we have, for all $f_+\in H_2^+$,
\beq
\label{7.9}
\mathcal F M\mathcal F^{-1}f_+=\mathcal F \left(\chi_+E\ast (\mathcal F^{-1}f_+)\right)=P^+th(\pi x)f_+=T_{th(\pi x)}f_+.
\eeq
Thus the spectrum of $S_J$ is the spectrum of the Toeplitz operator with symbol $th(\pi x)$ and, since the range of this symbol is $[-1,1]$, we conclude as in the previous case that $\sigma (S_J)=[-1,1]$.

\vspace{5mm}
Let us now apply the previous results and a WH $p$-factorisation of the symbol $g_\lambda(x)=-(sign\,x+\lambda)$ to obtain an expression for the resolvent $(S_{\mathbb R^+}-\lambda\,I_{\mathbb R^+})^{-1}$ with $\lambda\in\mathbb C\setminus[-1,1]$. Defining, for $z\neq 0$, $\log z=\log |z|+i\arg z$ with $\arg z \in ]-\pi,\pi]$, let $z^\alpha=e^{\alpha \log z}$ for 
\beq
\label{7.10}
\alpha =\frac{1}{2\pi i}\log\frac{\lambda -1}{\lambda+1}\,.
\eeq
Note that $|\Re (\alpha)|<1/2$ and
\beq 
\nonumber
(-ix)^\alpha=\exp (\alpha \log |x|-i\alpha\frac{\pi}{2}\,sign\,x) \in \mathcal H_2^+
\eeq
\beq 
\nonumber
(ix)^{-\alpha}=\exp (-\alpha \log |x|-i\alpha\frac{\pi}{2}\,sign\,x)  \in \mathcal H_2^-\,,
\eeq
so that $(-ix)^\alpha\,(ix)^{-\alpha}=e^{-i\alpha\pi\,sign \,x}$.
Now, since $\frac{\lambda -1}{\lambda+1}=\exp(i2\pi \alpha)$, we have
\begin{eqnarray*}
\nonumber
\lambda+sign\,x&=&\lambda+1=(\lambda+1)\exp (i\alpha\pi)(-ix)^\alpha(ix)^{-\alpha}\,\,\,,\,\,\,\mbox{for}\,\,x>0,\\
\lambda+sign\,x&=&(\lambda+1)\exp (i2 \alpha\pi)=(\lambda+1)\exp (i \alpha\pi)(-ix)^\alpha(ix)^{-\alpha}\,\,\,,\,\,\,\mbox{for}\,\,x<0.
\end{eqnarray*}
Defining
\begin{eqnarray*}
\nonumber
(g_\lambda)_+&=&(\lambda+1)\exp (i\alpha\pi)(-ix)^\alpha\\
(g_\lambda)_-&=&(ix)^{-\alpha}\,,
\end{eqnarray*}
we conclude that $g_\lambda=-(g_\lambda)_-\,(g_\lambda)_+$ is a canonical WH $2$-factorisation (cf. Corollary \ref{Cor 5.11}) and
\beq
\nonumber
\left(T_{-(sign\,x+\lambda)}\right)^{-1}=-(g_\lambda)_+^{-1}P^+(g_\lambda)_-^{-1}I_{|H_2^+}=(g_\lambda)_+^{-1}\mathcal F\,\chi_+\,\mathcal F^{-1}(g_\lambda)_-^{-1}I_{|H_2^+}.
\eeq
Let us now consider the operator
\beq
\nonumber
(S_{\mathbb R^+}-\lambda I_{\mathbb R^+})^{-1}=\mathcal F^{-1}(T_{-(sign\,x+\lambda)})^{-1}\mathcal F_{|(\chi_+L_2)}.
\eeq
Assume that $-1/2<\Re(\alpha)<0$ (the case where $0<\Re(\alpha)<1/2$ can be studied analogously).
Since (\cite{GrRy})
\beq
\nonumber
\left(\mathcal F^{-1}(g_\lambda)_-^{-1}\right)(t)=\chi_-(t)\,\frac{(-t)^{-\alpha -1}}{\Gamma(-\alpha)}=:K_-(t)
\eeq
and
\beq
\nonumber
(g_\lambda)_+^{-1}(x)=\frac{e^{-i\alpha \pi}}{\lambda+1}(-ix)(-ix)^{-\alpha -1}
\eeq
with
\beq
\nonumber
\left(\mathcal F^{-1}(-ix)^{-\alpha -1}\right)(t)=\frac{\chi_+(t)\,t^\alpha}{\Gamma(1+\alpha)}=:K_+(t)\,,
\eeq
we have, for $\phi_+\in L_2(\mathbb R^+)$,
\begin{eqnarray}
\nonumber
\left(\mathcal F^{-1}(g_\lambda)_+^{-1}\mathcal F (\chi_+\mathcal F^{-1}(g_\lambda)_-^{-1}\mathcal F\phi_+)\right)(t)=\,\,\,\,\,\,\,\,\,\,\,\,\,\;\;\;\;\;\;\;\;\;\;\,\\
=\frac{e^{-i\alpha \pi}}{(\lambda+1)\Gamma(-\alpha)\Gamma(1+\alpha)} \,\chi_+\,\frac{d}{dt}\int_0^t{(t-u)^\alpha\int_u^\infty{(s-u)^{-\alpha-1}f(s)ds}du}\,.\nonumber
\end{eqnarray}
Taking into account that 
\beq
\nonumber
\Gamma(-\alpha)\Gamma(1+\alpha)=\frac{\pi}{\sin (\alpha\pi)}=\frac{-2i\pi}{e^{i\alpha \pi}-e^{-i\alpha \pi}}
\eeq
we have, for $t>0$,
\begin{eqnarray}
(S_{\mathbb R^+}-\lambda I_{\mathbb R^+})^{-1}\,f(t)=\frac{1}{ i \pi} \frac{1}{(\lambda^2 -1)} \, \frac{d}{dt} \left[ \int_0^t (t-x)^\alpha   \left(   \int_x^{\infty}
\frac{f(s)}{(s-x)^{\alpha+1}} ds \right) dx  \right]\,.\nonumber
\end{eqnarray}
Since
\begin{eqnarray}
&&\frac{d}{dt} \left[ \int_0^t (t-x)^\alpha   \left(   \int_x^{\infty}
\frac{f(s)}{(s-x)^{\alpha+1}} ds \right) dx  \right]  \nonumber\\
&&=
\frac{d}{dt} \left[ \int_0^t f(s)   \left(   \int_0^s  
\frac{(t-x)^\alpha }{(s-x)^{\alpha+1}} dx \right) ds  
+ \int_t^{\infty} f(s)   \left(   \int_0^t 
\frac{(t-x)^\alpha }{(s-x)^{\alpha+1}} dx \right) ds 
\right] \nonumber\\
&&=
\frac{d}{dt} \left[ \int_0^t f(s)   \left(   \int_0^{s/t}  
\frac{1 }{(1-y) \, y^{\alpha+1}} dy \right) ds  
- \int_t^{\infty} f(s)   \left(   \int_{s/t}^{\infty} 
\frac{1 }{(1-y) \, y^{\alpha+1}} dy \right) ds  
\right] \nonumber\\
&&= f(t) \int_0^{\infty} \frac{1 }{(1-y) \, y^{\alpha+1}} dy
- t^\alpha \int_0^{\infty} \frac{f(s)}{(t-s) \, s^\alpha} ds \nonumber\\
&&= i \pi \lambda \,  f(t) -  t^\alpha \int_0^{\infty} \frac{f(s)}{(t-s) \, s^\alpha} ds \;, \nonumber
\end{eqnarray}
where we employed the change of variables
\begin{equation}
x = t - \frac{(t-s)}{1-y} \;,
\nonumber
\end{equation}
and the relation
\begin{equation}
\int_{\mathbb{R}^+}  \frac{y^{-\alpha-1} }{ \pi (1-y) } dy = -{\rm cotg} (\alpha \pi) = i \lambda 
\nonumber
\end{equation}

where the integral is understood as a principal value. 

Hence, 
\begin{equation}
(S_{\mathbb R^+}-\lambda I_{\mathbb R^+})^{-1}f(t) = \frac{1}{(\lambda^2 -1 )} \left[ \lambda \, f(t) - \frac{t^\alpha}{i \pi}
\int_{\mathbb{R}^+}  \frac{f(s)}{(t-s) s^\alpha} ds \right] \;\;\;\;\;\; (t > 0) \;.
\nonumber
\end{equation}

\end{document}